\newcommand{\rom}[1]{\uppercase\expandafter{\romannumeral #1\relax}}
\newcommand{\beas}{\begin{eqnarray*}}
\newcommand{\enas}{\end{eqnarray*}}
\newcommand{\bea}{\begin{eqnarray}}
\newcommand{\ena}{\end{eqnarray}}
\newcommand{\bms}{\begin{multline*}}
\newcommand{\ems}{\end{multline*}}
\newcommand{\bels}{\begin{align*}}
\newcommand{\enls}{\end{align*}}
\newcommand{\bel}{\begin{align}}
\newcommand{\enl}{\end{align}}
\newcommand{\ignore}[1]{}
\newcommand{\tr}{\mbox{tr\,}}
\numberwithin{equation}{section}
\newtheorem{theorem}{Theorem}[section]
\newtheorem{corollary}{Corollary}[section]
\newtheorem{lemma}{Lemma}[section]
\newtheorem{assumption}{Assumption}
\newtheorem{remark}{Remark}[section]
\newtheorem{fact}{Fact}
\def\blfootnote{\xdef\@thefnmark{}\@footnotetext}
\newcommand{\dotp}[2]{\left\langle#1,#2\right\rangle}
\newcommand{\m}{\mathcal}
\newcommand{\mb}{\mathbb}
\newcommand\argmin{\mathop{\mbox{argmin}}}
\newcommand{\card}{\mathrm{Card}}
\newcommand{\sign}{\mathrm{sign}}
\def\r{\right}
\def\l{\left}
\newcommand{\eps}{\varepsilon}
\newcommand{\var}{\mbox{Var}}
\newcommand{\wh}{\widehat}
\newcommand{\pr}[1]{\mathrm{Pr}{\left(#1 \right)}}
\begin{document}

\begin{frontmatter}
\title{Uniform bounds for robust mean estimators}
\runauthor{S. Minsker}
\runtitle{Robust ERM}

\begin{aug}
\author{\fnms{Stanislav} \snm{Minsker}\ead[label=e1,mark]{minsker@usc.edu} \thanksref{e1,a}}

\address[a]{Department of Mathematics, University of Southern California \\
\printead{e1}}

\thankstext{t3}{Supported in part by the National Science Foundation grant DMS-1712956.}
\end{aug}

\begin{abstract}
This paper is devoted to the estimators of the mean that provide strong non-asymptotic guarantees under minimal assumptions on the underlying distribution. 
The main ideas behind proposed techniques are based on bridging the notions of symmetry and robustness. 
We show that existing methods, such as median-of-means and Catoni's estimators, can often be viewed as special cases of our construction. 
The main contribution of the paper is the proof of uniform bounds for the deviations of the stochastic process defined by proposed estimators. Moreover, we extend our results to the case of adversarial contamination where a constant fraction of the observations is arbitrarily corrupted. Finally, we apply our methods to the problem of robust multivariate mean estimation and show that obtained inequalities achieve optimal dependence on the proportion of corrupted samples. 

\end{abstract}

\begin{keyword}
\kwd{robust estimation}
\kwd{median-of-means estimator}
\kwd{concentration inequalities}
\kwd{adversarial contamination}
\end{keyword}
\end{frontmatter}

\section{Introduction}
\label{sec:intro}

Let $(S,\m S)$ be the measurable space, and $X\in S$ be a random variable with distribution $P$. 
Moreover, suppose that $X_1,\ldots,X_N$ are i.i.d. copies of $X$. 
Assume that $\m F$ is a class of measurable functions from $S$ to $\mb R$. 
Many problems in mathematical statistics and statistical learning theory require simultaneously estimating $Pf:=\mb Ef(X)$ for all $f\in \m F$. For example, in the maximum likelihood estimation framework, $\m F=\l\{ \log p_\theta(\cdot), \ \theta\in \Theta \r\}$ is a family of probability density functions with respect to a $\sigma$-finite measure $\mu$, and $\frac{dP}{d\mu}=p_{\theta_\ast}$ for $\theta_\ast \in\Theta$. 
The most common way to estimate $\mb Ef(X)$ is via the empirical mean $P_N f:=\frac{1}{N}\sum_{j=1}^N f(X_j)$. 
Deviations of the resulting \emph{empirical process} $\m F\ni f\mapsto \sqrt{N}(P_N - P)f$ have been extensively studied, however, sharp estimates are known only under rather restrictive conditions, such as the case when functions in $\m F$ are uniformly bounded, or when the envelope $F(x) := \sup_{f\in \m F}|f(x)|$ of the class $\m F$ possesses finite exponential moments \cite{wellner1,kolt6,bartlett2005local,geer2000empirical,Adamczak:2008aa}.  

Here, we consider the situation when the random variables $\{f(X), \ f\in \m F\}$ indexed by $\m F$ are allowed to be heavy-tailed, meaning that they possess finite moments of low order only (in the context of this paper, ``low order'' will usually mean the range between $2$ and $3$). 
In this case, the tail probabilities $P\l( \l|\frac{1}{N}\sum_{j=1}^N f(X_j) - \mb Ef(X) \r| \geq t \r)$ decay polynomially, making many existing techniques unapplicable.  
Our approach to simultaneous mean estimation is based on replacing the sample mean by a different estimator of $\mb Ef(X)$ that is ``robust'' to heavy tails and admits tight concentration under minimal moment assumptions. 
Well known examples of such estimators include the median-of-means (MOM) estimator \cite{Nemirovski1983Problem-complex00,alon1996space,lerasle2011robust} and Catoni's estimator \cite{catoni2012challenging}. 
These two techniques rely on different principles for controlling the bias: while Catoni's estimator is (informally speaking) based on delicate ``truncation,'' the MOM estimator exploits the fact that both the median and the mean of a symmetric distribution coincide with the center of symmetry \cite{minsker2017distributed}.  
Construction proposed in this work shows that these principles can be unified. We suggest a family of estimators that can be viewed as a ``bridge'' between Catoni's estimator and the MOM technique, and prove uniform bounds for the deviations of the resulting stochastic process. 
We also address the more challenging framework of adversarial contamination and show that estimators of the mean of a random vector obtained using our methods admit optimal performance bounds in this case. 

\subsection{Notation and organization of the paper.}
\label{sec:definitions}

Absolute positive constants will be denoted $c,C,c_1$, etc., and may take different values in different parts of the paper. 
For a function $h:\mb R\mapsto\mb R$, define 
\[
\argmin_{z\in\mb R} h(z) := \{z\in\mb R: h(z)\leq h(x)\text{ for all }x\in \mb R\},
\]
and $\|h\|_\infty:=\mathrm{ess \,sup}\{ |h(x)|: \, x\in \mb R\}$. For a Lipschitz continuous function $h$, $L(h)$ will denote its Lipschitz constant. 
For $f\in \m F$, denote $\sigma^2(f) = \var(f(X))$ and $\sigma^2(\m F) = \sup_{f\in \m F}\sigma^2(f)$. 
Everywhere below, $\Phi(\cdot)$ stands for the cumulative distribution function of the standard normal random variable, and $W(f)$ denotes a random variable with distribution $N\l( 0,\sigma^2(f) \r)$. Additional notation and auxiliary results are introduced as necessary.

Material of the paper is organized as follows: section \ref{sec:construction} explains the main ideas behind construction of the estimators studied in the paper. The main results are  stated in section \ref{sec:M-est}, followed by discussion and comparison to the literature on the topic in section \ref{sec:comparison}. 
Section \ref{sec:adversary} discuss extensions of the results in the framework of adversarial contamination. 
Finally, section \ref{section:mean} explores implications of the bounds for the problem of multivariate mean estimation. 
Finally, the proofs are presented in section \ref{sec:proofs}.

\section{Construction of robust estimators of the mean.}
\label{sec:construction}

Proposed estimators are based on the following (informally stated) principles: 
\begin{enumerate}[(a)]
\item  If the distribution $Q$ is symmetric, then its center of symmetry $\theta(Q)$ can be approximated by a robust estimator, such as Huber's robust M-estimator of location \cite{huber1964robust} defined via
\begin{center}
$\widehat\theta := \argmin\limits_{z\in \mb R} \sum_{j=1}^N \rho\left( z - Y_j\right)$,
\end{center}
where $Y_1,\ldots,Y_N$ is an i.i.d. sample from $Q$ and $\rho$ is a convex, even function with bounded derivative.
\item In order to construct a robust estimator of a parameter $\theta^\diamond(Q)$ of (not necessarily symmetric) distribution $Q$ based on an i.i.d. sample $Y_1,\ldots,Y_N$, create an auxillary sample $\xi_1,\ldots,\xi_M$ such that
\begin{enumerate}
\item[(i)] it is governed by an approximately symmetric distribution;
\item[(ii)] the center of symmetry of this distribution is close to $\tilde\theta(P)$. 
\end{enumerate}
According to (a), we then define an estimator of $\theta^\diamond(Q)$ via 
\[
\widehat \theta^\diamond := \argmin\limits_{z\in \mb R} \sum_{j=1}^M \rho\left( z - \xi_j\right).
\] 

\end{enumerate}

The main focus of this work is the case when $Q$ is the distribution of $f(X)$ and $\theta^\diamond(Q)$ corresponds to the mean $\mb E f(X)$. 
To construct a ``new sample'' $\xi_1,\ldots,\xi_M$ governed by an approximately symmetric distribution centered at $\mb Ef(X)$, we rely on the fact that, under mild assumptions, the sample mean is asymptotically normal, hence asymptotically symmetric.   
Let $k$ be an integer, and assume that $G_1,\ldots,G_k$ are subsets of the index set $\{1,\ldots,N\}$ of cardinality $|G_j| = n:= \lfloor N/k\rfloor$ each, where the partition method is independent of the data $X_1,\ldots,X_N$. In general, we do not require the subsets to be disjoint, and different possibilities will be discussed in the following sections. 
Let 
\[
\bar \theta_j(f) = \frac{1}{n}\sum_{i\in G_j} f(X_i)
\]
be the empirical mean evaluated over the subsample indexed by $G_j$. 
Conditions on the suitable ``loss function'' $\rho$ are summarized in the following assumption. 
\begin{assumption}
\label{ass:1}
Suppose that $\rho: \mb R\mapsto \mb R$ is a convex, even, continuously differentiable function such that
\begin{enumerate}
\item[(i)] $\rho'(z)=z$ for $|z|\leq 1$ and $\rho'(z)=\mathrm{const}$ for $z\geq 2$.
\item[(ii)] $z - \rho'(z)$ is nondecreasing; 
\end{enumerate}
\end{assumption}
\noindent For instance, Huber's loss \cite{huber1964robust}
\begin{equation*}
\rho(z)=\frac{z^2}{2} I\{|z|\leq 1\} + \l( |z| - 1/2\r) I\{|z|>1\}
\end{equation*}
is an example of a function satisfying Assumption \ref{ass:1}.
\begin{remark}
\label{rem:sup}
Assumption \ref{ass:1} implies that $\rho'(2) - 2\leq \rho'(1)-1 = 0$, hence $\|\rho'\|_\infty \leq 2$. Moreover, for any $x>y$, 
$\rho'(x) - \rho'(y) = y - \rho'(y) - (x - \rho'(x)) + x - y \leq x-y$, hence $\rho'$ is Lipschitz continuous with Lipschitz constant $L(\rho')=1$. 
\end{remark}
Note that the loss $\rho(x)=|x|$ that leads to a ``classical'' median-of-means estimator does not satisfy assumption \ref{ass:1}, in particular, $\rho'$ is not Lipschitz continuous. Lipschitz continuity of $\rho'$ turns out to be crucial for the derivation of our uniform bounds, however, for finite classes $\m F$, the proofs are valid for broader class of losses that includes $\rho(x)=|x|$; see \cite{minsker2017distributed} for more details. 
The principles and assumptions stated above lead to the following definition of robust mean estimators: given $\Delta > 0$, set
\begin{align}
\label{eq:M-est}
\wh\theta^{(k)}(f):= \argmin_{z\in \mb R}\frac{1}{\sqrt{N}}\sum_{j=1}^k \rho\l(\sqrt{n}\,\frac{\bar \theta_j(f) - z}{\Delta}\r).
\end{align}
Parameter $\Delta$, when expressed on a ``natural scale'' of the problem defined by $\sigma(\m F)$, can be interpreted as the truncation level. It will be shown that different combinations of the subgroup size $n$ and ``truncation level'' $\Delta$ may lead to equally good bounds: in particular, when $n=1$ and $\Delta\propto \sigma(\m F)\sqrt{N}$, we will recover Catoni's estimator, while the case of large $n$ (e.g., $n\simeq \sqrt{N}$) and $\Delta\propto \sigma(\m F)$ leads to the MOM-type estimator. 

\subsection{Main results.}
\label{sec:M-est}

The collection of random variables $\l\{ \wh\theta^{(k)}(f) - Pf,  \ f\in \m F\r\}$ defines a stochastic process that is a natural analogue of the empirical process in the framework of heavy-tailed data. 
Our main goal is to characterize the size of the supremum of the process, namely
\footnote{We assume everywhere below that $\sup_{f\in \m F} \l| \wh\theta^{(k)}(f) - Pf \r|$ is properly measurable. See \cite{wellner1,dudley2014uniform} for in-depth discussion of measurability issues.}
\[
\sup_{f\in \m F} \l| \wh\theta^{(k)}(f) - Pf \r|.
\]
In particular, we will be interested in estimating the deviation probabilities $P\l( \sup_{f\in \m F} \l| \wh\theta^{(k)}(f) - Pf \r| \geq t\r)$ under minimal assumptions on the process $\l\{ f(X), \ f\in \m F\r\}$. 
As a corollary of our general bounds, we will obtain new results for the problem of mean estimation in $\mb R^d$. 
Everywhere below, it will be assumed that $\sigma(\m F)<\infty$.

As a first step, we introduce the main quantities appearing in our bounds. 
Given $f\in \m F$ such that $\sigma(f)>0$, $n\in \mb N$ and $t>0$, define
\begin{multline*}
g_f(t,n):= C \Bigg( \frac{ \mb E (f(X) - \mb Ef(X))^2 \, I\l\{ \frac{ |f(X)-\mb Ef(X)|}{\sigma(f)\sqrt{n}} >  1+\l|\frac{t}{\sigma(f)}\r| \r\} }{\sigma^2(f) \l(1+\l|\frac{t}{\sigma(f)}\r| \r)^2} 
\\
+ \frac{1}{\sqrt{n}}
\frac{\mb E |f(X)-\mb Ef(X)|^3 \, I\l\{ \frac{|f(X) - \mb Ef(X)|}{\sigma(f)\sqrt{n}} \leq 1+ \l|\frac{t}{\sigma(f)}\r|\r\}}{\sigma^3(f) \l(1+\l|\frac{t}{\sigma(f)}\r| \r)^3}\Bigg).
\end{multline*}
It follows from the results of L. Chen and Q.-M. Shao \cite[Theorem 2.2 in][]{chen2001non} that $g_f(t,n)$ controls the rate of convergence in the central limit theorem, namely 
\begin{equation}
\label{eq:BE-main}
\l| \pr{ \frac{\sum_{j=1}^n \l(f(X_j) - Pf\r)}{\sigma(f)\sqrt{n}} \leq t} - \Phi( t ) \r| \leq g_f(t,n),
\end{equation}
given that $\sigma^2(f)<\infty$ and that an absolute constant $C$ in the definition of $g_f(t,n)$ is large enough. 
The function $g_f(t,n)$ enters our bounds through the quantity that we define next. Given $\Delta>0$, set 
\[
G_f(n,\Delta) := \int_{0}^{\infty} g_f\l(\Delta\l( \frac{1}{2} + t \r),n \r)dt.
\] 
The following statement provides simple upper bounds for $g_f(t,n)$ and $G_f(n,\Delta)$. 
\begin{lemma}
\label{lemma:BE-nonunif}
Let $X_1,\ldots,X_n$ be i.i.d. copies of $X$, and assume that $\var(f(X))<\infty.$ Then $g_f(t,n)\to 0$ as $|t|\to\infty$ and $g_f(t,n)\to 0$ as $n\to\infty$, with convergence being monotone. Moreover, if $\mb E| f(X) - \mb Ef(X)|^{2+\delta}<\infty$ for some $\delta\in (0,1]$, then for all $t>0$
\begin{align}
\label{eq:g_f}
g_f(t,n)&\leq C'  \frac{\mb E\big|f(X) - \mb Ef(X) \big|^{2+\delta}}{n^{\delta/2}\l(\sigma(f)+ \l| t \r| \r)^{2+\delta}} \leq C' \frac{\mb E\big|f(X) - \mb Ef(X)\big|^{2+\delta}}{n^{\delta/2} |t|^{2+\delta}},
\\
\nonumber
G_f(n,\Delta) & \leq C'' \frac{\mb E\big|f(X) - \mb Ef(X)\big|^{2+\delta}}{\Delta^{2+\delta} n^{\delta/2}},
\end{align}
where $C',C''>0$ are absolute constants. 
\end{lemma}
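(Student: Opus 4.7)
Write $Y := f(X) - \mb Ef(X)$, set $u := 1 + |t|/\sigma(f)$ and $s := u\,\sigma(f)\sqrt{n}$; note that $u\sigma(f) = \sigma(f)+|t|$ and $\sqrt{n}\,u^3\sigma^3(f) = s\cdot u^2\sigma^2(f)$. With this notation, the two summands in the definition of $g_f(t,n)$ collapse into the single identity
\[
g_f(t,n) \;=\; \frac{C}{u^2\sigma^2(f)}\,H(s), \qquad H(s) \;:=\; \mb E\l[Y^2 I\{|Y|>s\}\r] + \frac{\mb E\l[|Y|^3 I\{|Y|\leq s\}\r]}{s}.
\]
This reformulation is the main algebraic step; every claim of the lemma will reduce to an analysis of the single function $H$.

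The crux is that $H$ is non-increasing on $(0,\infty)$. The obstacle here is that, taken alone, $s\mapsto \mb E[|Y|^3 I\{|Y|\leq s\}]/s$ need not be monotone, so the argument must exploit cancellation between the two pieces of $H$. For $0<s_1\leq s_2$, expanding the difference gives
\[
H(s_1)-H(s_2) = \mb E\l[Y^2 I\{s_1<|Y|\leq s_2\}\r] + \mb E\l[|Y|^3 I\{|Y|\leq s_1\}\r]\l(\tfrac{1}{s_1}-\tfrac{1}{s_2}\r) - \tfrac{1}{s_2}\mb E\l[|Y|^3 I\{s_1<|Y|\leq s_2\}\r],
\]
and the pointwise bound $|Y|^3\leq s_2 Y^2$ on $\{|Y|\leq s_2\}$ cancels the last (negative) term against the first, leaving $H(s_1)\geq H(s_2)$. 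Since $1/u^2$ is non-increasing in $|t|$ and $s$ is increasing in $n$ for fixed $t$, both monotone-convergence statements follow. Convergence to $0$ is then dominated convergence: $\mb E[Y^2 I\{|Y|>s\}]\to 0$ because $\mb E Y^2<\infty$, and $\mb E[|Y|^3 I\{|Y|\leq s\}]/s\to 0$ because the integrand is dominated by $Y^2$ and tends pointwise to $0$.

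For the quantitative bound, I would apply Markov-type estimates to each piece of $H(s)$: on $\{|Y|>s\}$, $|Y|^\delta\geq s^\delta$, giving $\mb E[Y^2 I\{|Y|>s\}]\leq s^{-\delta}\mb E|Y|^{2+\delta}$; on $\{|Y|\leq s\}$, $|Y|^{1-\delta}\leq s^{1-\delta}$ (using $\delta\in(0,1]$), giving $\mb E[|Y|^3 I\{|Y|\leq s\}]/s\leq s^{-\delta}\mb E|Y|^{2+\delta}$. Adding, $H(s)\leq 2 s^{-\delta}\mb E|Y|^{2+\delta}$, and substituting $s=(\sigma(f)+|t|)\sqrt{n}$ and $u^2\sigma^2(f)=(\sigma(f)+|t|)^2$ into the identity yields the first inequality of \eqref{eq:g_f}; the second follows from $\sigma(f)+|t|\geq |t|$. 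The $G_f$ bound then drops out by substituting the pointwise estimate into the defining integral and using $\int_0^\infty(1/2+t)^{-(2+\delta)}\,dt<\infty$.

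The only genuine obstacle is spotting the combined function $H$ and its monotonicity; the remainder is a routine combination of Markov's inequality and dominated convergence.
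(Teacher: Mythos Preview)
Your proof is correct. The quantitative estimates match the paper's approach almost exactly: the paper obtains the bound on the tail term $\mb E\bigl[Y^2 I\{|Y|>s\}\bigr]$ via H\"older's inequality followed by Markov's inequality, which yields the same $s^{-\delta}\mb E|Y|^{2+\delta}$ as your direct pointwise bound $Y^2\le |Y|^{2+\delta}/s^\delta$; the truncated third-moment term and the integration for $G_f$ are handled identically.

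The genuine difference is in the qualitative part. The paper simply asserts the monotonicity in $|t|$ and $n$ (``It is clear that\ldots'') and then, to show $g_f(t,n)\to0$ as $n\to\infty$, introduces an auxiliary sequence $a_n\to\infty$ with $a_n=o(\sqrt{n})$ and splits the truncated third moment over $\{|Y|\le a_n(\sigma+|t|)\}$ and its complement. Your reformulation $g_f(t,n)=\dfrac{C}{u^2\sigma^2}H(s)$ with $s=u\sigma\sqrt{n}$ is cleaner: it reduces both monotone limits to the single observation that $H$ is non-increasing (and your cancellation argument supplies the proof the paper omits), and the limit $H(s)\to0$ follows by dominated convergence without any auxiliary sequence. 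So your route is slightly more elegant and makes the monotonicity claim rigorous, while the paper's proof trades that for a more hands-on but ad hoc estimate; the underlying mathematics is the same.
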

\noindent The proof of this lemma is outlined in section \ref{proof:BE-nonunif}. 
We are now ready to state the main result. 
Let $\rho$ be a loss function satisfying Assumption \ref{ass:1}. 
Moreover, set 
\[
\widetilde \Delta:=\max\l( \Delta,\sigma(\m F)\r).
\]
\begin{theorem}
\label{th:unif}
Assume that $N=nk$ and that the subgroups $G_1,\ldots,G_k$ are disjoint. 
Then there exist absolute constants $c,\, C>0$ such that for all $s>0,$ $n$ and $k$ satisfying
\begin{equation}
\label{eq:assump}
\frac{1}{\Delta} \l( \frac{1}{\sqrt{k}} \, \mb E\sup_{f\in \m F} \frac{1}{\sqrt{N}}\sum_{j=1}^N \l( f(X_j) - Pf \r) + \sigma(\m F)\sqrt{\frac{s}{k}} \r) +
\sup_{f\in\m F} G_f(n,\Delta)
+ \frac{s}{k} \leq c,
\end{equation}
the following inequality holds with probability at least $1 - 2e^{-s}$:
\begin{multline*}
\sup_{f\in \m F}\l| \wh \theta^{(k)}(f) - Pf \r| \leq 
C
\Bigg[ \frac{\widetilde \Delta}{\Delta} \l( \mb E\sup_{f\in \m F} \frac{1}{N} \sum_{j=1}^N \l( f(X_j) - Pf \r)
+ \sigma(\m F) \sqrt{\frac{s}{N}} \r)
\\
+ \widetilde \Delta \l( \sqrt{n}\frac{s}{N}
+ \frac{\sup_{f\in\m F} G_f(n,\Delta)}{\sqrt{n}} \r) \Bigg].
\end{multline*}
\end{theorem}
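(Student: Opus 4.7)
The plan rests on a convexity reduction. Since $L_k(z,f) := \frac{1}{\sqrt N}\sum_{j=1}^k \rho\bigl(\sqrt n(\bar\theta_j(f) - z)/\Delta\bigr)$ is convex in $z$ and $\wh\theta^{(k)}(f)$ is its minimizer, the event $\wh\theta^{(k)}(f) \leq Pf + t$ is equivalent to the subgradient condition $\partial_z L_k(Pf+t,f)\geq 0$, i.e., to
\[
S_k(t,f) := \sum_{j=1}^k \rho'\l(\sqrt n\,\frac{\bar\theta_j(f) - Pf - t}{\Delta}\r) \leq 0,
\]
and symmetrically for $\wh\theta^{(k)}(f)\geq Pf-t$. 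Thus the theorem reduces to showing $\sup_f S_k(t,f)\leq 0$ and $\inf_f S_k(-t,f)\geq 0$ with probability $\geq 1-2e^{-s}$ for $t$ equal to the right-hand side of the stated bound.

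I then split $S_k(t,f) = (S_k(t,f) - \mb E S_k(t,f)) + \mb E S_k(t,f)$ and control each piece. For the stochastic part, the summands are independent, bounded by $\|\rho'\|_\infty\leq 2$, and have variance at most $\sigma^2(f)/\Delta^2$ (both by Remark~\ref{rem:sup}); Talagrand's inequality applied to the $\m F$-indexed empirical process yields, with probability $1-e^{-s}$,
\[
\sup_f\l|S_k(t,f) - \mb E S_k(t,f)\r| \;\lesssim\; \mb E\sup_f\l|S_k(t,f) - \mb E S_k(t,f)\r| + \frac{\sigma(\m F)\sqrt{ks}}{\Delta} + s.
\]
Rademacher symmetrization combined with the Ledoux--Talagrand contraction inequality (enabled by the $1$-Lipschitz property of $\rho'$) bounds the expected supremum by $(\sqrt n/\Delta)\mb E\sup_f|\sum_j \eps_j\bar\theta_j(f)|$, which, using disjointness of the $G_j$ and desymmetrization, is in turn bounded by $\frac{\sqrt k}{\Delta}\,\mb E\sup_f \frac{1}{\sqrt N}\sum_{i=1}^N(f(X_i)-Pf)$.

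For the bias $\mb E S_k(t,f)$, the non-uniform Berry-Esseen bound \eqref{eq:BE-main} replaces the distribution of $\sqrt n(\bar\theta_1(f)-Pf)$ by $W(f)\sim N(0,\sigma^2(f))$; integrating by parts against the $1$-Lipschitz $\rho'$ shows that the error incurred is precisely of order $k\sup_f G_f(n,\Delta)$, matching the definition of $G_f$. For the Gaussian expectation, symmetry of $W(f)$ and oddness of $\rho'$ give $\mb E\rho'(W(f)/\Delta)=0$, and a direct computation using $\rho''(z)=1$ on $[-1,1]$ (from Assumption~\ref{ass:1}(i)) yields the one-sided linear lower bound
\[
-\mb E\rho'\l(\frac{W(f)-t\sqrt n}{\Delta}\r) \;\geq\; c\,\frac{t\sqrt n}{\widetilde\Delta},
\]
valid for $t\sqrt n/\Delta$ in the range permitted by \eqref{eq:assump}; the factor $1/\widetilde\Delta$ reflects that $P(|W(f)|\leq\Delta)\asymp \min(1,\Delta/\sigma(f))=\Delta/\widetilde\Delta$. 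Requiring the bias $c\,kt\sqrt n/\widetilde\Delta$ to dominate both the Berry-Esseen error $k\sup_f G_f(n,\Delta)$ and the fluctuation bound above, then dividing through by $k\sqrt n/\widetilde\Delta = \sqrt{Nk}/\widetilde\Delta$, produces exactly the four summands of the stated bound.

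The principal obstacle is the quantitative bias lower bound: ensuring that the near-linear behaviour $-\mb E\rho'((W(f)-t\sqrt n)/\Delta)\gtrsim t\sqrt n/\widetilde\Delta$ is maintained uniformly in $f\in\m F$ with possibly disparate $\sigma(f)$, throughout the range of $t$ forced by the concentration and Berry-Esseen errors --- and this is precisely the role played by the smallness assumption~\eqref{eq:assump}. A secondary bookkeeping issue is routing the contraction/symmetrization chain so that the dominant complexity appearing in the final bound is the \emph{original} empirical process $\mb E\sup_f(P_N-P)f$ rather than a surrogate phrased in terms of the block averages $\bar\theta_j(f)$.
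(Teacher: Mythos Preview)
Your proposal is correct and follows the paper's argument closely: the convexity reduction to a one-sided sign condition on $S_k(t,f)$, the three-way decomposition (stochastic fluctuation, Berry--Esseen correction, Gaussian bias), the Klein--Rio/Talagrand concentration followed by symmetrization, contraction, and desymmetrization for the fluctuation term, and the case analysis on $\Delta$ versus $\sigma(f)$ for the Gaussian lower bound are exactly the steps the paper takes (Lemmas~\ref{lemma:step1}--\ref{lemma:step3}).

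One technical point in the Berry--Esseen step deserves care. Direct integration by parts against $\rho'$ yields
\[
\Bigl|\mb E\rho'\!\bigl(\tfrac{W_n-\sqrt n z}{\Delta}\bigr)-\mb E\rho'\!\bigl(\tfrac{W(f)-\sqrt n z}{\Delta}\bigr)\Bigr|
\leq \int_{-2}^{2}\rho''(u)\,g_f\bigl(\Delta u+\sqrt n\,z,\,n\bigr)\,du,
\]
and the contribution from $|u|\leq 1$ (where $\rho''\equiv 1$) evaluates $g_f$ near $0$; when $\Delta\gg\sigma(f)$ this piece is \emph{not} dominated by $G_f(n,\Delta)=\frac{1}{\Delta}\int_{\Delta/2}^\infty g_f(v,n)\,dv$. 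The paper circumvents this by first using $\mb E(\bar\theta_j(f)-Pf)=\mb E W(f)=0$ to replace $\rho'$ by $T(x):=x-\rho'(x)$, which vanishes identically on $[-1,1]$; the resulting layer-cake (equivalently, integration by parts against $T'=1-\rho''$, supported on $\{|u|\geq 1\}$) then produces exactly the integral defining $G_f(n,\Delta)$. This is the one refinement your sketch glosses over, but it is a straightforward fix rather than a different idea.
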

\noindent The proof of the theorem is outlined in section \ref{proof:unif}. 
We note that the requirement $N=nk$ is not restrictive, as replacing $N$ by $k\cdot \lfloor N/k \rfloor$ will only result in the change of absolute constants. 
The assumption that the groups sizes are equal is also not essential and is only imposed to avoid overly technical and cumbersome expressions. 
When the class $\m F$ is P-Donsker \cite{dudley2014uniform},  
$\limsup\limits_{N\to\infty} \Big| \mb E\sup\limits_{f\in \m F} \frac{1}{\sqrt N} \sum_{j=1}^N \l( f(X_j) - Pf \r) \Big|$ is bounded, hence condition \eqref{eq:assump} holds for $N$ large enough whenever $s$ is not too big and $\Delta$ is not too small, namely, $s\leq c'k$ and $\Delta \geq c'' \sigma(\m F)$. When discussing examples in the following section, this will be our default setup.

Estimator $\wh\theta^{(k)}(f)$ defined in \eqref{eq:M-est} depends on the choice of subgroups $G_1,\ldots,G_k$. It is natural to ask if there exists a version of $\widehat \theta^{(k)}(f)$ that is permutation-invariant. We address this question below and present a construction based on U-statistics.  
For an integer $n\leq \frac{N}{2}$, let $k=\lfloor N/n\rfloor$, and define 
\[
\m A_N^{(n)}:=\l\{  J: \ J\subseteq \{1,\ldots,N\}, \card(J)=n \r\}.
\] 
Let $h$ be a measurable function of $n$ variables. Recall that a U-statistic of order $n$ with kernel $h$ based on the i.i.d. sample $X_1,\ldots,X_N$ is defined as \citep{hoeffding1948class}
\begin{equation}
\label{u-stat}
U_{N,n} = \frac{1}{{N\choose n}}\sum_{J\in \m A_N^{(n)}} h \l( \{X_j\}_{j \in J} \r).
\end{equation}
Given $J\in A_N^{(n)}$, let $\bar\theta(f; J):=\frac{1}{n}\sum_{i\in J} f(X_i)$. Consider U-statistics of the form 
\[
U_{N,n}(z;f) = \frac{1}{{N\choose n}} \sum_{J\in \m A_N^{(n)}} \rho\l(\sqrt{n}\,\frac{\bar \theta(f;J) - z}{\Delta}\r),
\]
and set
\begin{equation*}
\widetilde\theta^{(k)}(f):= \argmin_{z\in \mb R} U_{N,n}(z;f).
\end{equation*}
\begin{theorem}
\label{th:unif-U}
There exist absolute constants $c,\, C>0$ such that for all $s>0,$ $n$ and $k$ satisfying
\begin{equation}
\label{eq:assump}
\frac{1}{\Delta} \l( \frac{1}{\sqrt{k}} \, \mb E\sup_{f\in \m F} \frac{1}{\sqrt{N}}\sum_{j=1}^N \l( f(X_j) - Pf \r) + \sigma(\m F)\sqrt{\frac{s}{k}} \r) +
\sup_{f\in\m F} G_f(n,\Delta)
+ \frac{s}{k} \leq c,
\end{equation}
the following inequality holds with probability at least $1 - 2e^{-s}$:
\begin{multline*}
\sup_{f\in \m F}\l| \wh \theta^{(k)}(f) - Pf \r| \leq 
C
\Bigg[ \frac{\widetilde \Delta}{\Delta} \l( \mb E\sup_{f\in \m F} \frac{1}{N} \sum_{j=1}^N \l( f(X_j) - Pf \r)
+ \sigma(\m F) \sqrt{\frac{s}{N}} \r)
\\
+ \widetilde \Delta \l( \sqrt{n}\frac{s}{N}
+ \frac{\sup_{f\in\m F} G_f(n,\Delta)}{\sqrt{n}} \r) \Bigg].
\end{multline*}

\end{theorem}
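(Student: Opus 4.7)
The plan is to follow the strategy used for Theorem \ref{th:unif}, exploiting the fact that $\widetilde\theta^{(k)}(f)$ is characterized by the first-order condition of a convex loss. Since $\rho$ is convex and differentiable, $U_{N,n}(z;f)$ is convex in $z$, and the minimizer $\widetilde\theta^{(k)}(f)$ satisfies
\begin{equation*}
\Psi_N(z;f) := \frac{1}{{N\choose n}}\sum_{J \in \m A_N^{(n)}} \rho'\!\l(\sqrt{n}\,\frac{\bar\theta(f;J)-z}{\Delta}\r) = 0.
\end{equation*}
Since $\rho'$ is nondecreasing, $\Psi_N(\cdot;f)$ is nonincreasing, so if $R$ denotes the right-hand side of the stated inequality, it suffices to prove uniformly in $f \in \m F$, with probability at least $1-2e^{-s}$, that $\Psi_N(Pf + R;f) \leq 0$ and $\Psi_N(Pf - R;f) \geq 0$. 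I focus on the upper bound; the lower one is symmetric.

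Decompose $\Psi_N(z;f) = \mb E \Psi_N(z;f) + (\Psi_N(z;f) - \mb E \Psi_N(z;f))$. Observe that, for any fixed $J_0 \in \m A_N^{(n)}$, the marginal distribution of $\bar\theta(f;J_0)$ equals that of a mean of $n$ i.i.d.\ copies of $f(X)$, so
\begin{equation*}
\mb E\Psi_N(z;f) = \mb E\rho'\!\l(\sqrt{n}\,\frac{\bar\theta(f;J_0)-z}{\Delta}\r),
\end{equation*}
which is exactly the expected estimating function appearing in the proof of Theorem \ref{th:unif}. In particular, the non-uniform Berry--Esseen bound \eqref{eq:BE-main} together with $\|\rho'\|_\infty \leq 2$ provides the same lower bound on $-\mb E\Psi_N(Pf+R;f)$ in terms of $\sup_{f\in \m F} G_f(n,\Delta)$ plus a term linear in $R/\widetilde\Delta$, valid whenever $R$ is of the order stated in the theorem.

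The new ingredient is controlling $\Psi_N - \mb E\Psi_N$ uniformly in $f$. For this I would use Hoeffding's permutation identity: when $N = nk$,
\begin{equation*}
\Psi_N(z;f) = \frac{1}{N!}\sum_{\pi \in S_N} \frac{1}{k}\sum_{j=1}^k \rho'\!\l(\sqrt{n}\,\frac{\bar\theta_j^\pi(f) - z}{\Delta}\r),
\end{equation*}
where $\bar\theta_j^\pi(f) = \frac{1}{n}\sum_{i=(j-1)n+1}^{jn} f(X_{\pi(i)})$ is the mean over the $j$-th disjoint block of the permuted sample. Jensen's inequality applied to the permutation average bounds $\mb E\sup_f|\Psi_N - \mb E\Psi_N|$ by the expected supremum of a single block-averaged estimating function, which in turn is controlled by the same Rademacher-type quantity $\frac{1}{\Delta\sqrt{k}}\mb E\sup_{f\in \m F}\frac{1}{\sqrt{N}}\sum_{j=1}^N(f(X_j)-Pf)$ that appears in Theorem \ref{th:unif}, via symmetrization and Talagrand's contraction principle (using $L(\rho')=1$ from Remark \ref{rem:sup}). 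For the deviation above the mean, apply McDiarmid's bounded-differences inequality directly to $\sup_{f\in\m F}(\Psi_N - \mb E\Psi_N)(z;f)$: replacing a single $X_i$ changes at most ${N-1 \choose n-1}$ summands of $\Psi_N$ by at most $2\|\rho'\|_\infty \leq 4$ each, giving a per-coordinate bounded difference of $4n/N$ and a sum-of-squares of $16n^2/N = 16n/k$, which matches the disjoint-block rate and yields the desired $\sigma(\m F)\sqrt{s/k}/\Delta$ fluctuation.

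The main obstacle is verifying that no constant is lost in the reduction and that the three contributions (expected supremum via Hoeffding + Jensen, McDiarmid tail, and $G_f(n,\Delta)$ bias) combine exactly as in Theorem \ref{th:unif}; the U-statistic structure turns out to be no worse than a single disjoint-block sum under this analysis, which is why the two theorems share an identical bound. Once $\Psi_N(Pf+R;f) \leq 0$ is established uniformly in $f$ with probability $1-e^{-s}$ (and symmetrically for $\Psi_N(Pf-R;f) \geq 0$), convexity of $U_{N,n}(\cdot;f)$ automatically yields $\widetilde\theta^{(k)}(f) \in [Pf-R, Pf+R]$ uniformly in $f$ with probability $1-2e^{-s}$, completing the argument.
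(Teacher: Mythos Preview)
Your overall architecture is right and matches the paper: characterize $\widetilde\theta^{(k)}(f)$ via the monotone estimating function $\Psi_N(\cdot;f)=U'_{N,n}(\cdot;f)$, decompose into expectation plus fluctuation, handle the expectation via Lemmas \ref{lemma:step2}--\ref{lemma:step3}, and control the expected supremum of the fluctuation using Hoeffding's permutation identity together with Jensen, symmetrization and contraction. All of this is correct and coincides with the paper's argument.

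The gap is in your concentration step. McDiarmid applied to $\sup_{f\in\m F}\bigl(\Psi_N-\mb E\Psi_N\bigr)$ does \emph{not} produce the claimed fluctuation $\frac{\sigma(\m F)}{\Delta}\sqrt{s/k}$. Your bounded-difference computation is correct: $c_i=4n/N$ and $\sum_i c_i^2=16n/k$. But McDiarmid then yields a deviation of order $\sqrt{ns/k}$, with no dependence on $\sigma(\m F)$ or $\Delta$ whatsoever---the inequality is variance-blind. After multiplying by $\widetilde\Delta/\sqrt{n}$ in the analogue of Lemma \ref{lemma:step3}, this contributes $\widetilde\Delta\sqrt{s/k}=\widetilde\Delta\sqrt{n}\,\sqrt{s/N}$ to the final bound, which exceeds the required $\frac{\widetilde\Delta}{\Delta}\sigma(\m F)\sqrt{s/N}$ by the factor $\sqrt{n}\,\Delta/\sigma(\m F)$. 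Worse, the term $\sqrt{ns/k}$ need not even be small under condition \eqref{eq:assump}, so the reduction to Lemma \ref{lemma:step3} would fail for large $n$.

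The paper obtains the correct variance-dependent tail by applying Jensen's inequality at the level of the \emph{moment generating function}, not just the mean: writing $U'_{N,n}=\frac{1}{N!}\sum_\pi T_\pi$ and using convexity of $e^{\lambda x}$ gives
\[
\mb E\exp\Bigl(\lambda\sup_{f}\bigl(U'_{N,n}-\mb E U'_{N,n}\bigr)\Bigr)\le \mb E\exp\Bigl(\lambda\sup_{f}\bigl(T_{1,\ldots,N}-\mb E T_{1,\ldots,N}\bigr)\Bigr),
\]
after which the Klein--Rio mgf bound \eqref{eq:mgf} applies to the disjoint-block sum $T_{1,\ldots,N}$. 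This is what produces \eqref{eq:U-concentr}, with the variance term $\frac{\sigma(\m F)}{\Delta}\sqrt{2s/k}$ (coming from Lemma \ref{lemma:variance}) and the Bernstein remainder $\frac{16s}{3k}$. Replace your McDiarmid step with this Jensen-on-the-mgf argument and the rest of your outline goes through exactly as in Theorem \ref{th:unif}.
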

The resulting deviation bounds for $\widetilde \theta^{(k)}(f)$ are of exactly the same form as for the estimator $\wh\theta^{(f)}(f)$ that is based on disjoint blocks of data. The proof of this result is given in section \ref{proof:unif-U}.

\subsection{Discussion and comparison with existing bounds.}
\label{sec:comparison}

A number of recent works address the problem of robust empirical risk minimization that is closely related to the question addressed in the present paper. 
In \cite{brownlees2015empirical}, authors prove uniform deviation bounds for robust mean estimators defined using O. Catoni's approach \cite{catoni2012challenging}; these bounds are limited by their dependence on the covering numbers of the class $\m F$ with respect to the sup-norm $\l\| \cdot \r\|_\infty$. 
  Uniform bounds for the median-of-means estimators have been obtained in several papers, including \cite{lugosi2016risk,lecue2018robust,lugosi2017regularization,lecue2017robust}. 
Result that is closest to our setting has been obtained in \cite{lecue2018robust}: proof of Theorem 2 of that paper implies that the estimator $\wh\theta^{(k)}_{\mathrm{med}}(f)$ corresponding to $\rho(x)=|x|$ satisfies
\begin{equation}
\label{eq:slow}
\sup_{f\in \m F}\l| \wh\theta^{(k)}_{\mathrm{med}}(f) - Pf \r| \leq 
C
\Bigg( \frac{1}{\sqrt{N}} \, \mb E\sup_{f\in \m F} \l| \frac{1}{\sqrt{N}} \sum_{j=1}^N \l( f(X_j) - Pf \r)\r| 
+ \sigma(\m F) \sqrt{\frac{k}{N}} \Bigg)
\end{equation}
with probability at least $1 - e^{-ck}$ for absolute constant $c,C>0$; a slightly stronger version of this result has appeared in \cite{lugosi2018near}. 
The key difference between this inequality and the bound of Theorem \ref{th:unif} is the fact that the former holds only for the fixed value of the confidence parameter $s=k$, while the latter typically provides deviation guarantees over the wide range  $0<s\leq ck$ of confidence parameter $s$. 
This difference is important, as one usually wants to choose $k$ as large as possible to improve robustness (in particular, robustness to adversarial contamination) without degrading performance of the estimator. 
The price that we have to pay is the necessity to tune the parameter $\Delta$. However, as we show below, in many cases there is a wide range of ``suitable'' choices of $\Delta$, so this issue is not critical.

Let us consider two examples. 
First, assume that $n=1$, $k=N$ and set $\Delta = \Delta(s):= \sigma(\m F)\sqrt{\frac{N}{s}}$. In this case, for $N$ large enough, condition \eqref{eq:assump} reduces to $s\leq c' N$, and we deduce from Theorem \ref{th:unif} that 
\[
\sup_{f\in \m F}\l| \wh \theta^{(k)}(f) - Pf \r| \leq 
C
\Bigg[ \mb E\sup_{f\in \m F} \frac{1}{N} \sum_{j=1}^N \l( f(X_j) - Pf \r)
+ \sigma(\m F) \sqrt{\frac{s}{N}} \Bigg]
\] 
with probability at least $1-2e^{-s}$. 
This inequality recovers, up to constants, the result of Catoni \cite{catoni2012challenging} for $\m F=\{ f\}$ (indeed, in this case the expected supremum is $0$), and improves upon uniform bounds obtained for Catoni-type estimators in \cite{brownlees2015empirical}. If $\Delta = \sigma(\m F)\sqrt{N}$, we get the ``sub-exponential'' bound 
\[
\sup_{f\in \m F}\l| \wh \theta^{(k)}(f) - Pf \r| \leq 
C
\Bigg[ \mb E\sup_{f\in \m F} \frac{1}{N} \sum_{j=1}^N \l( f(X_j) - Pf \r)
+ \sigma(\m F) \frac{1+s}{\sqrt{N}} \Bigg]
\]
that holds with probability at least $1-e^{-s}$ uniformly for $s\leq c' N$. 
Moreover, if 
\[
\kappa_{2+\delta}(\m F):=\frac{\sup_{f\in \m F} \mb E|f(X) - \mb Ef(X)|^{2+\delta}}{\sigma^{2+\delta}(\m F)}<\infty
\] 
for some $\delta\in(0,1]$, then the estimator is less sensitive to the choice of the parameter $\Delta$. Specifically, 
\begin{equation}
\label{eq:corollary}
\sup_{f\in \m F}\l| \wh \theta^{(k)}(f) - Pf \r| \leq 
C
\Bigg[ \mb E\sup_{f\in \m F} \frac{1}{N} \sum_{j=1}^N \l( f(X_j) - Pf \r)
+ \sigma(\m F) \sqrt{\frac{s}{N}} \Bigg]
\end{equation}
with probability at least $1-2e^{-s}$ for any $\Delta$ satisfying 
$\sigma(\m F)\l( \frac{N}{s}\r)^{\frac{1}{2(1+\delta)}} \l(\kappa_{2+\delta}(\m F)\r)^{1/(1+\delta)} \leq \Delta \leq \sigma(\m F)\sqrt{\frac{N}{s}}$. For instance, if $\delta=1$, then any $\Delta$ in the range $\sigma(\m F)(N/s)^{1/4}\lesssim \Delta \lesssim \sigma(\m F)\sqrt{N/s}$ is suitable.
Equivalently, for a given $\Delta$, the sub-Gaussian type bound \eqref{eq:corollary} holds uniformly for all $s \in \l[ \kappa^2_{3} \frac{M_\Delta^{4}}{N}, M_\Delta^2\r]$ where $M_\Delta = \frac{\sigma(\m F)\sqrt{N}}{\Delta}$. 

\noindent Next, assume that $N\gg n\geq 2$. For $\Delta = \sigma(\m F)\sqrt{\frac k s}$, we again recover the bound 
\[
\sup_{f\in \m F}\l| \wh \theta^{(k)}(f) - Pf \r| \leq 
C
\Bigg[ \mb E\sup_{f\in \m F} \frac{1}{N} \sum_{j=1}^N \l( f(X_j) - Pf \r)
+ \sigma(\m F) \sqrt{\frac{s}{N}} \Bigg]
\]
that holds with probability at least $1-2e^{-s}$. 
When $\Delta\asymp \sigma(\m F)$, $\widehat \theta^{(k)}(f)$ most closely resembles the median-of-means estimator. 
In this case, the inequality that holds with probability at least $1-2e^{-s}$ is  
\[
\sup_{f\in \m F}\l| \wh \theta^{(k)}(f) - Pf \r| \leq 
C
\Bigg[ \mb E\sup_{f\in \m F} \frac{1}{N} \sum_{j=1}^N \l( f(X_j) - Pf \r)
+ \sigma(\m F) \l(\sqrt{\frac{s}{N}} + \sqrt{\frac k N}\sup_{f\in \m F} G_f(n,\sigma(\m F)) \r) \Bigg].
\]
As $\sup_{f\in \m F} G_f(n,\sigma(\m F))$ is small for large $n$, this bound is clearly better than \eqref{eq:slow}. 
Finally, let us again consider the case when stronger moment assumptions hold, namely, $\kappa_3(\m F)<\infty$. 
Combining the estimate for $\sup_{f\in \m F}G_f(n,\Delta)$ provided by Lemma \ref{lemma:BE-nonunif} and the bound of Theorem \ref{th:unif}, we obtain that 
\begin{multline*}
\sup_{f\in \m F}\l| \wh \theta^{(k)}(f) - Pf \r| \leq 
C
\Bigg[ \mb E\sup_{f\in \m F} \frac{1}{N} \sum_{j=1}^N \l( f(X_j) - Pf \r)
+ \sigma(\m F) \sqrt{\frac{s}{N}}
\\
+ \frac{\sup_{f\in \m F}\mb E|f(X) - \mb Ef(X)|^3}{\Delta^2} \frac{k}{N} + \Delta \frac{s}{\sqrt{kN}}\Bigg],
\end{multline*}
again with probability at least $1-2e^{-s}$. 
If $\Delta$ is such that $c_1\sigma(\m F)\l(1\vee\sqrt{\frac{k}{\sqrt{Ns}}} \r) \leq \Delta \leq c_2\sigma(\m F)\sqrt{k/s}$ then this inequality implies sub-Gaussian deviation bounds at confidence level $s$.  
If for instance $1\ll k\ll \sqrt{N}$ and $\Delta \asymp \sigma(\m F)$, then 
\[
\sup_{f\in \m F}\l| \wh \theta^{(k)}(f) - Pf \r| \leq 
C
\Bigg[ \mb E\sup_{f\in \m F} \frac{1}{N} \sum_{j=1}^N \l( f(X_j) - Pf \r)
+ \sigma(\m F) \sqrt{\frac{s}{N}} \Bigg]
\]
for all $s\leq c'k$ uniformly. 
\section{Contamination with outliers.}
\label{sec:adversary}


Assume that the initial dataset of cardinality $N$ is merged with a set of $\m O<N$ outliers that are generated by an adversary who has an opportunity to inspect the data, and the combined dataset of cardinality $N^\circ = N+\m O$ is presented to a statistician. 
We would like to understand performance of proposed estimators $\widehat \theta^{(k)}(f)$ in this more challenging framework. 
Let $G_1,\ldots,G_k$ be the disjoint partition of the index set $\{1,\ldots,N^\circ\}$ that the estimators $\l\{\wh \theta^{(k)}(f), \ f\in \m F\r\}$ are based on; as before, $n$ will stand for the cardinality of $G_j$. 

In the rest of the section, we will assume that $k>2\m O$. 
Let $J\subset \{1,\ldots,k\}$ of cardinality $|J|\geq k-\m O$ be the subset containing all $j$ such that the subsample $\{ X_i, \ j\in G_j\}$ does not include outliers. Clearly, $\{ X_i: \ i\in G_j, \ j\in J\}$ are still i.i.d. as the partitioning scheme is independent of the data. 
Moreover, set $N_J:=\sum_{j\in J} |G_j|$, and note that 
\[
N_J\geq n |J|\geq \frac{kn}{2}.
\] 
The following analogue of Theorem \ref{th:unif} holds.
\begin{theorem}
\label{th:unif2}
There exist absolute constants $c,\, C>0$ such that for all $s>0,$ $n$ and $k$ satisfying
\begin{equation}
\label{eq:assump}
\frac{1}{\Delta} \l( \frac{1}{\sqrt{k}} \, \mb E\sup_{f\in \m F} \frac{1}{\sqrt{N}}\sum_{j=1}^N \l( f(X_j) - Pf \r) + \sigma(\m F)\sqrt{\frac{s}{k}} \r) +
\sup_{f\in\m F} G_f(n,\Delta)
+ \frac{s+\m O}{k} \leq c,
\end{equation}
the following inequality holds with probability at least $1 - 2e^{-s}$:
\begin{multline*}
\sup_{f\in \m F}\l| \wh \theta^{(k)}(f) - Pf \r| \leq 
C
\Bigg[ \frac{\widetilde \Delta}{\Delta} \l( \mb E\sup_{f\in \m F} \frac{1}{N} \sum_{j=1}^N \l( f(X_j) - Pf \r)
+ \sigma(\m F) \sqrt{\frac{s}{N}} \r)
\\
+ \widetilde \Delta \l( \sqrt{n}\frac{s+ \m O}{N}
+ \frac{\sup_{f\in\m F} G_f(n,\Delta)}{\sqrt{n}} \r) \Bigg].
\end{multline*}
\end{theorem}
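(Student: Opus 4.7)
The plan is to reduce Theorem \ref{th:unif2} to Theorem \ref{th:unif} by exploiting the bounded derivative $\|\rho'\|_\infty \leq 2$ (Remark \ref{rem:sup}) to absorb the contribution of the $\m O$ contaminated blocks as a small deterministic perturbation of the score function. Since $\wh\theta^{(k)}(f)$ minimizes a convex function of $z$, the event $\wh\theta^{(k)}(f) - Pf > t$ is equivalent to
\[
\Psi_f(t) := \sum_{j=1}^k \rho'\l(\sqrt{n}\,\frac{\bar\theta_j(f) - Pf - t}{\Delta}\r) > 0
\]
(up to sign), and an analogous characterization holds for the lower tail. Thus it suffices to show that, uniformly over $f \in \m F$ and for $t$ matching the right-hand side of the theorem, $\Psi_f(t) < 0$ with probability at least $1-2e^{-s}$.

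Split $\Psi_f(t) = \Psi_f^{J}(t) + \Psi_f^{J^c}(t)$ according to the clean and contaminated blocks. The contaminated part is controlled deterministically: $|\Psi_f^{J^c}(t)| \leq |J^c| \|\rho'\|_\infty \leq 2\m O$ for every $f$ and every $t$, so it suffices to establish $\Psi_f^J(t) < -2\m O$. This is precisely the type of inequality proved in Theorem \ref{th:unif}, applied to the i.i.d.\ subsample indexed by $\bigcup_{j \in J} G_j$, but with an additional $2\m O$ offset that the clean score must beat. Because the partition is independent of the data, $\{X_i : i \in G_j,\, j \in J\}$ are still i.i.d., and since $|J| \geq k - \m O \geq k/2$ and $N_J \geq nk/2$, the concentration arguments underlying Theorem \ref{th:unif} carry over verbatim up to absolute constants.

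To absorb the extra $2\m O$, note that each block contributes at most $\frac{\sqrt n}{\sqrt N \Delta}\|\rho'\|_\infty$ to the normalized score, so the ``shift'' required in $t$ to accommodate an extra $\m O$ in $\Psi_f^J$ is of order $\widetilde\Delta\, \sqrt{n}\,\m O/N$ after rescaling by the slope of the expected score near $Pf$ (which is where the factor $\widetilde\Delta/\Delta$ in front of the leading terms, and the prefactor $\widetilde\Delta$ in front of the sub-exponential terms, come from in Theorem \ref{th:unif}). In the same way, replacing $s$ by $s + \m O$ in the applicability condition \eqref{eq:assump} reflects the requirement that the deterministic outlier budget $\m O$ fit inside the same sub-exponential band as the stochastic fluctuations bounded by $\sqrt{ks}$; the bookkeeping then produces the stated term $\sqrt n\,(s+\m O)/N$ and the modified smallness condition $(s+\m O)/k \leq c$.

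The main obstacle is ensuring that the proof of Theorem \ref{th:unif}, used as a black box, really delivers a score-function inequality in a form flexible enough to accommodate a constant additive offset on the right-hand side and a $k/2$-sized index set in place of $k$. If instead the original proof produces only a bound on $\sup_{f \in \m F}|\wh\theta^{(k)}(f) - Pf|$ rather than on $\sup_{f,t}\Psi_f(t)$, one must re-open the proof just long enough to extract the score-level statement; thereafter, the entire argument is essentially a deterministic perturbation computation combined with the reduction $|J| \asymp k$, $N_J \asymp N$.
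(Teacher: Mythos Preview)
Your proposal is correct and follows essentially the same route as the paper: split the score function over clean and contaminated blocks, bound the contaminated part deterministically by $2\m O$ via $\|\rho'\|_\infty\le 2$, and re-run the three-step argument underlying Theorem~\ref{th:unif} (Lemmas~\ref{lemma:step1}--\ref{lemma:step3}) on the clean subsample with the additional offset $2\m O/\sqrt{k}$, using $|J|\ge k/2$, $N_J\ge N/2$ and Jensen's inequality to replace $|J|$, $N_J$ by $k$, $N$ up to constants. You are also right that Theorem~\ref{th:unif} cannot be invoked purely as a black box; the paper indeed reopens the proof at the score-function level exactly as you anticipate.
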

It is convenient to interpret the inequality as follows: if $\m O\leq c k$ for a sufficiently small absolute constant $c$, the error $\sup_{f\in \m F}\l| \wh \theta^{(k)}(f) - Pf \r|$ behaves like the maximum of 2 terms: the first term is the error bound for the case $\m O = 0$, and the second term is of order 
$\widetilde\Delta \sqrt{n}\frac{\m O}{N}$. In the next section, we provide examples which show that the dependence on $\m O$ in our bounds is, in general, non-improvable. 





\subsection{Estimators of the mean of a random vector.}
\label{section:mean}

Assume that $X_1,\ldots,X_N$ are i.i.d. copies of a random vector $X\in \mb R^d$ with mean $\mb EX = \mu$ and covariance matrix $\mb E(X-\mu)(X-\mu)^T = \Sigma$. 
Let $\|\cdot\|$ be some norm in $\mb R^d$, and let $B$ be the unit ball with respect to this norm, $B=\l\{ x\in \mb R^d: \ \|x\|\leq 1\r\}$. Consider the class of linear functionals $\m F=\l\{ f_v(x)=\langle v,x \rangle, \ v\in B\r\}$. 
Our goal is to estimate the mean $\mu$, with the error measured in the norm $\|\cdot\|$. 
Construction that we propose is closely related to the approach employed previously by several authors \cite{joly2017estimation,lugosi2018near,catoni2016pac,giulini2018robust} that is based on combining estimators of one-dimensional projections. 
Assume that we are in the ``adversarial contamination'' framework of Theorem \ref{th:unif2}. 
Let $\rho$ be a function satisfying assumption \ref{ass:1}, and let $\wh\theta^{(k)}(v)$ be the estimator of $\langle \mu,v \rangle$, the projection of $\mu$ in direction $v\in B$:  
\[
\wh\theta^{(k)}(v):= \argmin_{z\in \mb R}\frac{1}{\sqrt{N}}\sum_{j=1}^k \rho\l(\sqrt{n}\,\frac{\bar \mu_j(v) - z}{\Delta}\r),
\]
where $\bar\mu_j(v)=\frac{1}{n}\sum_{i\in G_j}\langle v,X_j\rangle$, and $\Delta\geq \sqrt{\lambda_{\max}(\Sigma)}$. 
Given $v\in B$ and $\eps>0$, define the closed ``slab''
\[
S_v(\eps):=\l\{ y\in \mb R^d: \ \l| \langle y, v \rangle - \wh\theta^{(k)}(v) \r| \leq \eps\r\}, 
\] 
and $M(\eps):=\bigcap_{v\in B} S_v(\eps)$. 
Finally, let $\eps_\ast:=\inf\l\{ \eps>0: \ M(\eps)\ne \emptyset\r\}$, and 
take $\wh\mu^{(k)}$ to be any element in $M(\eps_\ast)$ (indeed, $M(\eps_\ast) = \bigcap_{\eps > \eps_\ast} M(\eps)$ is non-empty as an intersection of nested compact sets). 
\begin{corollary}
\label{cor:norm}
There exist absolute constants $\tilde c,\tilde C>0$ with the following properties: assume that 
\begin{equation}
\label{cond:2}
\max\l( \frac{1}{\sqrt{k}\Delta}  \, \mb E\sup_{v\in B} \l| \frac{1}{\sqrt{N}}  \sum_{j=1}^N \langle v, X_j - \mu \rangle \r| + \frac{s}{k} + \sup_{v\in B} G_f(n,\Delta), \frac{\m O}{k}   \r) \leq \tilde c.
\end{equation}
Then with probability at least $1-2e^{-s}$, 
\[
\l\| \wh\mu^{(k)} - \mu\r\| \leq \tilde C \Bigg(  \mb E\sup_{v\in B}  \frac{1}{N} \sum_{j=1}^N \langle v, X_j - \mu \rangle + 
\sqrt{\lambda_{\max}(\Sigma)}\sqrt{\frac{s}{N}} + \Delta\l( \sup_{v\in B}  \frac{G_{f_v}(n,\Delta)}{\sqrt n} + \sqrt{n}\frac{s+\m O}{N}\r)
 \Bigg).
\]
\end{corollary}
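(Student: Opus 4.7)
The plan is to invoke Theorem~\ref{th:unif2} on the class of linear functionals $\m F = \{f_v(x) = \langle v, x\rangle : v\in B\}$ and then convert the resulting uniform control on the one-dimensional projection estimators into a norm-error bound on $\wh\mu^{(k)}$ via a triangle inequality argument.

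First, I would apply Theorem~\ref{th:unif2} to this $\m F$. Since for every $v\in B$ we have $\sigma^2(f_v) = v^T \Sigma v \leq \lambda_{\max}(\Sigma)$, the assumption $\Delta \geq \sqrt{\lambda_{\max}(\Sigma)}$ forces $\widetilde\Delta = \Delta$, so the prefactor $\widetilde\Delta/\Delta$ equals $1$; moreover, condition~\eqref{cond:2} is precisely the hypothesis~\eqref{eq:assump} of the theorem restricted to this class (up to the absolute constant absorbed into $\tilde c$). The theorem then delivers, with probability at least $1 - 2 e^{-s}$, the uniform bound
\[
\eps_0 \;:=\; \sup_{v\in B} \l| \wh\theta^{(k)}(v) - \langle \mu, v\rangle \r| \;\leq\; C \, R(s,n,k,N,\Delta,\m O),
\]
where $R$ is exactly the quantity inside the parentheses on the right-hand side of the corollary.

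Next, on this event, $|\langle \mu, v\rangle - \wh\theta^{(k)}(v)| \leq \eps_0$ for every $v\in B$, which means $\mu \in M(\eps_0)$ and hence $\eps_\ast \leq \eps_0$ by definition of $\eps_\ast$. Monotonicity of $M(\cdot)$ in its argument then gives $\wh\mu^{(k)} \in M(\eps_\ast) \subseteq M(\eps_0)$, so for every $v \in B$ the triangle inequality yields
\[
\l| \langle v, \wh\mu^{(k)} - \mu \rangle \r| \;\leq\; \l| \langle v, \wh\mu^{(k)}\rangle - \wh\theta^{(k)}(v) \r| + \l| \wh\theta^{(k)}(v) - \langle v, \mu\rangle \r| \;\leq\; 2\eps_0.
\]
Taking the supremum over $v \in B$ and using that $\|y\| = \sup_{v\in B}|\langle v, y\rangle|$ under the natural duality between $B$ and the norm of interest, we obtain $\|\wh\mu^{(k)} - \mu\| \leq 2\eps_0$, yielding the claim after absorbing the factor $2$ into $\tilde C$.

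The argument is essentially bookkeeping, and I do not anticipate a substantive obstacle. The only points requiring care are (i) verifying that condition~\eqref{cond:2} implies the hypothesis~\eqref{eq:assump} of Theorem~\ref{th:unif2} with a compatible absolute constant, using $\sigma(\m F) \leq \sqrt{\lambda_{\max}(\Sigma)} \leq \Delta$ to simplify every occurrence of $\widetilde\Delta$ to $\Delta$, and (ii) handling the duality between $B$ and $\|\cdot\|$, which is implicit in the setup of the corollary.
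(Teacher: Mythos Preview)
Your proposal is correct and follows essentially the same approach as the paper: apply Theorem~\ref{th:unif2} to the class of linear functionals, use the resulting uniform bound to conclude $\mu\in M(\eps_0)$ and hence $\eps_\ast\le\eps_0$, and finish with the triangle inequality and the duality identity $\|y\|=\sup_{v\in B}|\langle v,y\rangle|$. The only cosmetic difference is that you define $\eps_0$ as the random supremum $\sup_{v\in B}|\wh\theta^{(k)}(v)-\langle\mu,v\rangle|$ and bound it by the deterministic quantity, whereas the paper takes $\eps_0$ to be the deterministic bound directly; the arguments are otherwise identical.
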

\begin{proof}
It follows from Theorem \ref{th:unif2} that on the event $\m E$ of probability at least $1-2e^{-s}$, $\mu\in M(\eps)$ for all 
\[
\eps\geq \eps_0:=C \Bigg( \mb E\sup_{f\in \m F}  \frac{1}{N}  \sum_{j=1}^N \langle v, X_j - \mu \rangle + 
\sqrt{\lambda_{\max}(\Sigma)}\sqrt{\frac{s}{N}} + \Delta\l( \sup_{v\in B}  \frac{G_{f_v}(n,\Delta)}{\sqrt n} + \sqrt{n}\frac{s+\m O}{N}\r)
 \Bigg)
\] 
given that \eqref{cond:2} holds, hence $\eps_\ast \leq \eps_0$ on event $\m E$. Consequently, 
\[
\l\| \wh\mu^{(k)} - \mu \r\| = \sup_{v\in B}\l| \langle \wh\mu^{(k)} - \mu,v\rangle \r|
\leq \sup_{v\in B}\l| \langle \wh\mu^{(k)} ,v\rangle - \wh\theta^{(k)}(v) \r| + \sup_{v\in B}\l| \langle \mu ,v\rangle -  \wh\theta^{(k)}(v)\r| \leq \eps_\ast + \eps_0 \leq 2\eps_0
\]
with probability at least $1 - 2e^{-s}$.
\end{proof}

In the special case when $\|\cdot\|$ is the Euclidean norm $\|\cdot\|_2$, the bound of Corollary \ref{cor:norm} can be further simplified. It follows from H\"{o}lder's inequality that 
\begin{multline*}
\mb E\sup_{v\in \mb R^d: \|v\|_2=1} \l| \frac{1}{\sqrt{N}}  \sum_{j=1}^N \langle v, X_j - \mu \rangle \r| 
\leq \mb E^{1/2}\sup_{v\in \mb R^d: \|v\|_2=1} \l| \frac{1}{\sqrt{N}}  \sum_{j=1}^N \langle v, X_j - \mu \rangle \r|^2 
\\
= \sqrt{\frac{1}{N} \mb E \l\| \sum_{j=1}^N (X_j - \mu)\r\|_2^2} = \sqrt{\mb E \l\| X - \mu \r\|_2^2} =\sqrt{\tr \Sigma},
\end{multline*}
hence $\wh \mu^{(k)}$ satisfies 
\[
\l\| \wh\mu^{(k)} - \mu\r\|_2 \leq C \Bigg( \sqrt{\frac{\tr \Sigma}{N}} + \sqrt{\lambda_{\max}(\Sigma)}\sqrt{\frac{s}{N}} + \Delta\l( \sup_{v\in B}  \frac{G_{f_v}(n,\Delta)}{\sqrt n} + \sqrt{n}\frac{s+\m O}{N}\r) \Bigg)
\]
with probability at least $1 - 2e^{-s}$ (whenever $s\leq c'k$). 
According to the discussion in section \ref{sec:comparison}, in many cases the term 
$\Delta\l( \sup_{v\in B}  \frac{G_{f_v}(n,\Delta)}{\sqrt n} + \sqrt{n}\frac{s+\m O}{N} \r)$ is of order smaller than $N^{-1/2}$, whence the estimator $\widehat\mu^{(k)}$ behaves like the sample mean of the Gaussian random variables \cite{lugosi2018near}. 

Let us now discuss optimality with respect to adversarial contamination. Assume that $\m O = \eps N$ for $\eps\geq \frac{1}{N}$; here, we assume $\eps$  to be known in advance, and the issue of adaptivity is beyond the scope of this paper. 
 Moreover, suppose that 
\[
\kappa_{2+\delta}:=\sup_{v: \|v\|_2=1}\frac{\mb E \l| \langle v,X - \mu\rangle \r|^{2+\delta}}{\l( \var\langle v,X-\mu\rangle\r)^{1+\delta/2}} < \infty
\]
for some $\delta\in(0,1]$. In this case, Lemma \ref{lemma:BE-nonunif} implies that 
\[
\sup_{v\in B} G_{f_v}(n,\Delta)\leq C \frac{\sup_{\|v\|_2=1} \mb E\l| \dotp{X-\mu}{v}\r|^{2+\delta}}{\Delta^{2+\delta}n^{\delta/2}}
\]
for some absolute constant $C>0$. 
Let $M_\Delta:=\frac{\Delta}{\sigma(\m F)}$, and recall that $M_\Delta\geq 1$ by assumption. Then, choosing 
$k = \eps^{\frac{2}{2+\delta}}\, N\frac{M_\Delta^2 }{\kappa^{\frac{2}{2+\delta}}_{2+\delta}}$, we deduce from Corollary \ref{cor:norm} that 
\[
\l\| \wh\mu^{(k)} - \mu\r\|_2 \leq C \Bigg( \sqrt{\frac{\tr \Sigma}{N}} + \sqrt{\lambda_{\max}(\Sigma)}\l(\sqrt{\frac{s}{N}} + \eps^{\frac{1+\delta}{2+\delta}}\, \kappa^{1/(2+\delta)}_{2+\delta} \r) + \frac{\kappa^{1/(2+\delta)}_{2+\delta}}{\eps^{1/(2+\delta)} N}\, s \Bigg)
\]
with probability at least $1-2e^{-s}$. Since $\eps\geq \frac{1}{N}$, $\eps^{1/(2+\delta)} N \geq N^{-\frac{1+\delta}{2+\delta}} = o\l( N^{-1/2} \r)$ whenever $\delta>0$, hence the last term in the bound is of smaller order. 
According to Lemma \ref{lemma:minmax} (see section \ref{section:supplement}), no estimator can achieve rate faster than $\eps^{\frac{1+\delta}{2+\delta}}$ with respect to $\eps$, implying that our estimator is optimal in this sense.


\section{Proofs.}
\label{sec:proofs}

We will introduce some additional notation and recall useful results that we rely upon in the proofs. 
Denote 
\[
G_k(z;f) = \frac{1}{\sqrt{k}}\sum_{j=1}^k \rho'\l( \sqrt{n}\,\frac{ (\bar \theta_j(f) - Pf) - z}{\Delta} \r)
\]
and let $\wh \theta_0^{(k)}(f)$ be any solution of the equation $G_k\l(\wh \theta_0^{(k)}(f);f\r)=0$. 
Clearly, $\wh \theta_0^{(k)}(f) = \wh \theta^{(k)}(f) - Pf$ is the error of the estimator $\wh \theta^{(k)}(f)$.

\noindent The following concentration inequality is due to Klein and Rio (see section 12.5 in \cite{boucheron2013concentration}). 
\begin{fact}
\label{fact:ineq}
Let $\{Z_j(f), \ f\in \m F\}, \ j=1,\ldots,N$ be independent (not necessarily identically distributed) separable stochastic processes indexed by class $\m F$ and such that $|Z_j(f)-\mb EZ_j(f)|\leq M$ a.s. for all $1\leq j\leq N$ and $f\in \m F$. Then the following inequality holds with probability at least $1-e^{-s}$:
\begin{align}
\label{eq:klein1}
\sup_{f\in \m F}\l( \sum_{j=1}^N (Z_j(f) - \mb EZ_j(f)) \r) &\leq 2\mb E\sup_{f\in \m F} \l( \sum_{j=1}^N (Z_j(f) - \mb EZ_j(f))\r) 
+V(\m F)\sqrt{2s}+\frac{4Ms}{3},
\end{align}
where $V^2(\m F)=\sup_{f\in \m F}\sum_{j=1}^N \var\l( Z_j(f)\r)$.
\end{fact}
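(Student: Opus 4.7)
The plan is to reduce Theorem \ref{th:unif2} to Theorem \ref{th:unif} via a deterministic perturbation argument. The starting point is the same as in the uncontaminated case: since $\rho$ is convex, $\rho'$ is nondecreasing, so $z\mapsto G_k(z;f)$ is nonincreasing and $|\wh\theta^{(k)}(f)-Pf|\leq t$ if and only if $G_k(-t;f)\geq 0\geq G_k(t;f)$. It therefore suffices to establish a two-sided bound on $G_k(\pm t;f)$ uniformly in $f\in\m F$ for $t$ of the target size.

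Couple the observed sample to a hypothetical all-clean sample of size $N^\circ=nk$ by drawing independent i.i.d.\ copies of $X$ to fill in the $\m O$ positions occupied by outliers, and let $\tilde G_k(z;f)$ denote the value computed from this idealized sample. Since Remark \ref{rem:sup} gives $\|\rho'\|_\infty\leq 2$ and at most $\m O$ of the $k$ subgroups are affected by outliers, each $\rho'$-term in $G_k$ differs from the corresponding term in $\tilde G_k$ by at most $4$, so
\[
\l| G_k(z;f) - \tilde G_k(z;f) \r| \leq \frac{4\m O}{\sqrt{k}} \qquad \text{for every } z\in\mb R,\ f\in\m F,
\]
deterministically and uniformly over the adversary's strategy. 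The function $\tilde G_k$ is built from $N^\circ$ i.i.d.\ copies of $X$ partitioned into $k$ disjoint groups of size $n$, so the proof of Theorem \ref{th:unif} applies verbatim to $\tilde G_k$, yielding under condition \eqref{eq:assump} and with probability at least $1-2e^{-s}$ a uniform bound on $|\tilde G_k(\pm t;f)-\mb E\tilde G_k(\pm t;f)|$ by the $s$-dependent part of the target right-hand side.

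Combining the two ingredients,
\[
G_k(t;f)\leq\mb E\tilde G_k(t;f)+R(t)+\frac{4\m O}{\sqrt{k}},\qquad G_k(-t;f)\geq\mb E\tilde G_k(-t;f)-R(t)-\frac{4\m O}{\sqrt{k}},
\]
where $R(t)$ encodes the uniform concentration error. The Berry--Esseen bound \eqref{eq:BE-main} together with Lemma \ref{lemma:BE-nonunif} then approximates $\mb E\tilde G_k(\pm t;f)$ by its Gaussian surrogate, producing the linear drift $\mp\sqrt{N}\,t/\widetilde\Delta$ modulo $\sup_{f\in\m F}G_f(n,\Delta)$ bias, exactly as in Theorem \ref{th:unif}. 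Choosing $t$ so that the drift dominates $R(t)+4\m O/\sqrt{k}$ and rescaling by $\widetilde\Delta/\sqrt{N}$ converts the extra $\m O/\sqrt{k}$ budget into the additive term $\widetilde\Delta\sqrt{n}\,\m O/N$ (using $N=nk$), which is precisely the $\m O$-dependent term in the statement.

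The main obstacle is that the adversary's placement of outliers may depend on the clean data, so the set of corrupted subgroups is random and data-correlated; without care, this would require a union bound over all subsets of size $\leq \m O$ and cost a factor $\m O\log(ek/\m O)$ rather than $\m O$. This difficulty is defused precisely because the $4\m O/\sqrt{k}$ perturbation bound is deterministic and uniform over all $z$, $f$, and adversary strategies, so the entire concentration analysis is performed on $\tilde G_k$, whose joint distribution across $f$ is unaffected by the adversary. A secondary technicality is that condition \eqref{eq:assump} now requires $(s+\m O)/k\leq c$ rather than $s/k\leq c$; this extra room is exactly what is needed to keep $\m O/\sqrt{k}$ smaller than the drift at the chosen scale of $t$, closing the argument.
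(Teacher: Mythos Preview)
Your proposal does not address the stated result. The statement you were asked to prove is Fact~\ref{fact:ineq}, the Klein--Rio concentration inequality for suprema of bounded empirical processes. What you have written instead is a proof sketch for Theorem~\ref{th:unif2}, the adversarial-contamination extension of Theorem~\ref{th:unif}. These are entirely different statements: Fact~\ref{fact:ineq} is a general-purpose Talagrand-type inequality used as a black-box tool throughout the paper, whereas Theorem~\ref{th:unif2} is one of the paper's main results about the robust mean estimator $\wh\theta^{(k)}(f)$.

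In the paper, Fact~\ref{fact:ineq} is not proved at all: it is quoted as a known result due to Klein and Rio, with a reference to section~12.5 of \cite{boucheron2013concentration}. The paper also records the moment-generating-function bound \eqref{eq:mgf} from which \eqref{eq:klein1} follows, again without proof. If you were to supply a proof of Fact~\ref{fact:ineq}, it would require the machinery of Talagrand's concentration for suprema (entropy or exchangeable-pairs methods), none of which appears in your write-up.

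As an aside: your argument, read as a proof of Theorem~\ref{th:unif2}, is close in spirit to the paper's own proof in section~\ref{proof:unif2}. The paper does not introduce auxiliary clean data to fill the outlier positions; instead it simply splits $G_k(z;f)$ into the sum over $j\in J$ (groups untouched by outliers) and $j\notin J$, bounds the latter by $2\m O/\sqrt{k}$ via $\|\rho'\|_\infty\leq 2$, and reruns the three-step analysis of Theorem~\ref{th:unif} on the clean part with $|J|$ groups and $N_J\geq nk/2$ observations. Your coupling device achieves essentially the same effect with a constant $4$ instead of $2$, and has the mild advantage that the concentration step is run on exactly $k$ groups rather than $|J|$; but it is not materially different, and in any case it is not a proof of the statement you were given.
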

\noindent It is easy to see, applying \eqref{eq:klein1} to processes $\{-Z_j(f), \ f\in \m F\}$, that
\begin{equation}
\label{eq:klein2}
\inf_{f\in \m F}\l( \sum_{j=1}^N (Z_j(f) - \mb EZ_j(f))\r) \geq -2\mb E\sup_{f\in \m F} \l( \sum_{j=1}^N (\mb EZ_j(f) - Z_j(f))\r) 
 - V(\m F)\sqrt{2s} - \frac{4Ms}{3}
\end{equation}
with probability at least $1-e^{-s}$.	
Moreover, \eqref{eq:klein1} is a corollary of the following bound for the moment generating function: 
\begin{equation}
\label{eq:mgf}
\log \mb E e^{\lambda \l( \sum_{j=1}^N (Z_j(f) - \mb EZ_j(f))\r)}\leq \frac{e^{\lambda M} - \lambda M - 1}{M^2}\l( V^2(\m F) + 2M \,\mb E\sup_{f\in \m F} \l( \sum_{j=1}^N (Z_j(f) - \mb E Z_j(f))\r)\r)
\end{equation}
that holds for all $\lambda>0$. 
This fact provides a straightforward extension of the concentration bounds to the case of U-statistics. 
Let $\pi_N$ be the collection of all permutations $i:\{1,\ldots,N\}\mapsto \{1,\ldots,N\}$. 
Given $(i_1,\ldots,i_N)\in \pi_N$ and a U-statistic $U_{N,n}$ defined in \eqref{u-stat}, let 
\begin{equation*}
T_{i_1,\ldots,i_N}:=\frac{1}{k}\l( h\l(X_{i_1},\ldots,X_{i_n} \r) + h\l(X_{i_{n+1}},\ldots,X_{i_{2n}}\r) + \ldots + 
h\l(X_{i_{(k-1)n+1}},\ldots,X_{i_{kn}} \r) \r).
\end{equation*}
It is well known (see section 5 in \cite{hoeffding1963probability}) that the following representation holds:
\begin{equation}
\label{eq:U-decomp}
U_{N,n} = \frac{1}{N!}\sum_{(i_1,\ldots,i_N) \in \pi_N}  T_{i_1,\ldots,i_N}.
\end{equation}
Let $U_{N,n}'(z;f) = \frac{1}{{N\choose n}} \sum_{J\in \m A_N^{(n)}} \rho'\l(\sqrt{n}\,\frac{(\bar \theta(f;J) - Pf) - z}{\Delta}\r)$. 
Applied to $U'_{N,n}(z;f)$, relation \eqref{eq:U-decomp} yields that 
\[
U'_{N,n}(z;f) = \frac{1}{N!}\sum_{(i_1,\ldots,i_N)\in \pi_N} T_{i_1,\ldots,i_N}(z;f),
\]
where 
\begin{multline*}
T_{i_1,\ldots,i_N}(z;f) = \frac{1}{k}\Big(  \rho'\l(\sqrt{n}\,\frac{\bar \theta(f;\{i_1,\ldots,i_n\}) - Pf - z}{\Delta}\r) +
\\ 
\ldots +  \rho'\l(\sqrt{n}\,\frac{\bar \theta(f;\{i_{(k-1)n+1},\ldots,i_{kn}\}) - Pf - z}{\Delta}\r) \Big).
\end{multline*}
Jensen's inequality implies that for any $\lambda>0$,
\begin{multline*}
\mb E \exp\l( \frac{\lambda}{N!} \sum_{(i_1,\ldots,i_N)\in \pi_N} \l( T_{i_1,\ldots,i_N}(z;f) -\mb E T_{i_1,\ldots,i_N}(z;f)\r)\r) 
\\
\leq \frac{1}{N!} \sum_{(i_1,\ldots,i_N)\in \pi_N} \mb E \exp\Big( \lambda \l( T_{1,\ldots,N}(z;f) -\mb E T_{1,\ldots,N}(z;f)\r) \Big),
\end{multline*}
hence bound \eqref{eq:mgf} can be applied and yields that 
\begin{multline}
\label{eq:U-concentr}
\sup_{f\in \m F}\l( U'_{N,n}(z;f) - \mb U'_{N,n}(z;f)\r) \leq 2\mb E\sup_{f\in \m F}\l( T_{1,\ldots,N}(z;f) - \mb E T_{1,\ldots,N}(z;f) \r) 
\\
+ \sup_{f\in \m F}\sqrt{\var\l( \rho'\l(\sqrt{n}\,\frac{\bar \theta(f;\{1,\ldots,n\}) - Pf - z}{\Delta}\r)\r)}\sqrt{\frac{2s}{k}} + \frac{8s\|\rho'\|_\infty}{3k}
\end{multline}
with probability at least $1-e^{-s}$. The expression can be further simplified by noticing that $\|\rho'\|_\infty\leq 2$ and that  
\[
\var\l( \rho'\l(\sqrt{n}\,\frac{\bar \theta(f;\{1,\ldots,n\}) - Pf - z}{\Delta}\r)\r) \leq \frac{\sigma^2(f)}{\Delta^2}.
\]
due to Lemma \ref{lemma:variance}. 

\subsection{Proof of Theorem \ref{th:unif}.}
\label{proof:unif}

Recall that
\[
G_k(z;f) = \frac{1}{\sqrt{k}}\sum_{j=1}^k \rho'\l( \sqrt{n}\,\frac{ (\bar \theta_j(f) - Pf) - z}{\Delta} \r).
\]
Suppose $z_1,z_2$ are such that on an event of probability close to $1$, $G_k(z_1;f) > 0$ and $G_k(z_2;f) < 0$ for all $f\in \m F$ simultaneously. 
Since $G_k$ is decreasing in $z$, it is easy to see that $\wh \theta_0^{(k)}(f)\in (z_1,z_2)$ for all $f\in \m F$ on this event, implying that $\l| \sup_{f\in \m F}\wh \theta_0^{(k)}(f) \r|\leq \max(|z_1|,|z_2|)$. 
Hence, our goal is to find $z_1,z_2$ satisfying conditions above and such that $|z_1|,\, |z_2|$ are as small as possible. 
We will provide detailed bounds for $z_1$, while the steps to estimate $z_2$ are quite similar. 
Observe that 
\begin{multline*}
G_k(z;f) = \frac{1}{\sqrt{k}} \sum_{j=1}^k \l(  \rho'\l( \sqrt{n}\frac{(\bar \theta_j(f) - Pf) - z}{\Delta} \r) - \mb E\rho'\l(\sqrt{n} \frac{(\bar \theta_j(f) - Pf) - z}{\Delta} \r)  \r)  \\ 
+ \frac{1}{\sqrt{k}}\sum_{j=1}^k  \l( \mb E\rho'\l(\sqrt{n} \frac{(\bar \theta_j(f) - Pf) - z}{\Delta} \r) - 
\mb E \rho'\l( \frac{W(f) - \sqrt{n} z }{\Delta} \r) \r) + 
\frac{1}{\sqrt{k}}\sum_{j=1}^k \mb E \rho'\l( \frac{W(f) - \sqrt{n} z }{\Delta} \r).
\end{multline*}
We will proceed in 3 steps: first, we will find $\eps_1>0$ such that for any $z\in \mb R$,
\begin{equation}
\label{eq:eps1}
\inf_{f\in \m F} \frac{1}{\sqrt{k}}\sum_{j=1}^k \l( \rho'\l( \sqrt{n}\frac{(\bar \theta_j(f) - Pf) - z}{\Delta} \r) - \mb E\rho'\l(\sqrt{n} \frac{(\bar \theta_j(f) - Pf) - z}{\Delta} \r)  \r) \geq -\eps_1
\end{equation}
with high probability, then $\eps_2>0$ such that 
\begin{equation*}
\inf_{f\in \m F} \frac{1}{\sqrt{k}}\sum_{j=1}^k\l( \mb E\rho'\l(\sqrt{n} \frac{(\bar \theta_j(f) - Pf) - z}{\Delta} \r) - 
\mb E \rho'\l( \frac{W(f) - \sqrt{n} z }{\Delta} \r) \r) \geq -\eps_2,
\end{equation*}
and finally we will choose $z_1<0$ such that for all $f\in \m F$,
\begin{equation}
\label{eq:z1}
\frac{1}{\sqrt{k}} \sum_{j=1}^k \mb E \rho'\l( \frac{W(f) - \sqrt{n} z_1 }{\Delta} \r) > \eps_1 + \eps_2.
\end{equation}
It follows from Lemma \ref{lemma:step1} that setting 
\[
\eps_1 =  \frac{8 }{\Delta\sqrt{N}} \, \mb E\sup_{f\in \m F} \sum_{j=1}^N \l( f(X_j) - Pf \r) 
+ \frac{\sigma(\m F)}{\Delta}\sqrt{2s} + \frac{16}{3}\frac{s}{\sqrt{k}}
\]
guarantees that \eqref{eq:eps1} holds with probability at least $1 - e^{-s}$. 
Next, Lemma \ref{lemma:step2} implies that $\eps_2$ can be chosen as
\[
\eps_2 = 2\sqrt{k}\,\sup_{f\in\m F} G_f(n,\Delta).
\]
Finally, we apply Lemma \ref{lemma:step3} with 
\[
\eps := \frac{\eps_1 + \eps_2}{\sqrt{k}}
\]
to deduce that 
\begin{equation*}
z_1 = -  \frac{1}{0.09}
 \bigg( \frac{8 L(\rho')}{N}\frac{\widetilde \Delta}{\Delta} \, \mb E\sup_{f\in \m F}  \sum_{j=1}^N \l( f(X_j) - Pf \r)\ + \sigma(\m F)\frac{\widetilde \Delta}{\Delta}\sqrt{\frac{2s}{N}} + \widetilde \Delta\frac{16}{3} \frac{s\sqrt{n}}{N} + 2\frac{\sup_{f\in \m F} G_f(n,\Delta)}{\sqrt{n}} \Bigg)
\end{equation*}
satisfies \eqref{eq:z1} under assumption that $\eps\leq 0.045$. 
Proceeding in a similar way, it is easy to see that setting $z_2=-z_1$ guarantees that $G_k(z_2;f)<0$ for all $f\in \m F$ with probability at least $1 - e^{-s}$, hence the claim follows.

\begin{lemma}
\label{lemma:step1}
For any $z\in \mb R$ and $\rho$ satisfying Assumption \ref{ass:1}, the inequalities
\begin{align*}
\inf_{f\in \m F}\frac{1}{\sqrt{k}} \sum_{j=1}^k &\l(  \rho'\l( \sqrt{n}\frac{(\bar \theta_j(f) - Pf) - z}{\Delta} \r) - \mb E\rho'\l(\sqrt{n} \frac{(\bar \theta_j(f) - Pf) - z}{\Delta} \r)  \r)  \\
&\geq -\frac{8}{\Delta\sqrt{N}} \, \mb E\sup_{f\in \m F} \sum_{j=1}^N \l( f(X_j) - Pf \r) 
- \frac{\sigma(\m F)}{\Delta}\sqrt{2s} - \frac{16}{3} \frac{s}{\sqrt{k}},
\\
\sup_{f\in \m F}\frac{1}{\sqrt{k}} \sum_{j=1}^k &\l(  \rho'\l( \sqrt{n}\frac{(\bar \theta_j(f) - Pf) - z}{\Delta} \r) - \mb E\rho'\l(\sqrt{n} \frac{(\bar \theta_j(f) - Pf) - z}{\Delta} \r)  \r)  \\
&\leq \frac{8}{\Delta\sqrt{N}} \, \mb E\sup_{f\in \m F} \sum_{j=1}^N \l( f(X_j) - Pf \r) 
+ \frac{\sigma(\m F)}{\Delta}\sqrt{2s} + \frac{16}{3}\frac{s}{\sqrt{k}},
\end{align*}
hold with probability at least $1-e^{-s}$ each.
\end{lemma}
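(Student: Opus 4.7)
The plan is to apply the Klein--Rio concentration inequality (Fact \ref{fact:ineq}) to the stochastic processes $Z_j(f) := \rho'\bigl(\sqrt{n}(\bar\theta_j(f) - Pf - z)/\Delta\bigr)$ indexed by $f \in \mathcal F$, which are independent across $j = 1,\ldots,k$ because the blocks $G_1,\ldots,G_k$ are disjoint. The bounds $\|\rho'\|_\infty \le 2$ and $L(\rho') = 1$ from Remark \ref{rem:sup} immediately give $M = 4$ and $\var(Z_j(f)) \le (n/\Delta^2)\var(\bar\theta_j(f)) = \sigma^2(f)/\Delta^2$, so that $V^2(\mathcal F) \le k\sigma^2(\mathcal F)/\Delta^2$. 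Once divided by $\sqrt{k}$, the conclusion of \eqref{eq:klein1} already supplies the $\sigma(\mathcal F)\sqrt{2s}/\Delta$ and $16s/(3\sqrt{k})$ terms on the right-hand side of the lemma, so the remaining task is to bound the expected-supremum term.

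To bound $\mathbb E\sup_f \sum_{j=1}^k(Z_j(f) - \mathbb E Z_j(f))$ I would proceed by three standard reductions. Standard symmetrization (via a ghost sample and Rademacher variables $\varepsilon_j$) gives $2\,\mathbb E\sup_f \sum_j \varepsilon_j Z_j(f)$. Next, the Ledoux--Talagrand contraction principle applies because $\rho'$ is $1$-Lipschitz and odd (as $\rho$ is even, so $\rho'(0) = 0$), stripping off $\rho'$ and producing $2\,\mathbb E\sup_f \sum_j \varepsilon_j\,\sqrt{n}(\bar\theta_j(f) - Pf - z)/\Delta$. The $f$-free piece $-(\sqrt{n}z/\Delta)\sum_j \varepsilon_j$ pulls out of the supremum and vanishes under $\mathbb E_\varepsilon$. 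Writing $\sqrt{n}(\bar\theta_j(f) - Pf) = n^{-1/2} W_j(f)$ with $W_j(f) := \sum_{i \in G_j}(f(X_i) - Pf)$, this reduces to $(2/(\Delta\sqrt{n}))\,\mathbb E\sup_f \sum_j \varepsilon_j W_j(f)$. Finally, a desymmetrization at the block level---using that the $W_j(f)$ are i.i.d.\ and centered, and that $\varepsilon_j(W_j - W'_j)$ is distributed as $W_j - W'_j$ for independent ghost copies $W'_j$---bounds this by $(4/(\Delta\sqrt{n}))\,\mathbb E\sup_f \sum_{j=1}^N(f(X_j) - Pf)$. Combining with the Klein--Rio factor of $2/\sqrt{k}$ and using $\sqrt{n}\sqrt{k} = \sqrt{N}$ produces the coefficient $8/(\Delta\sqrt{N})$ stated in the lemma.

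The upper-sided inequality is obtained by applying the companion bound \eqref{eq:klein2} to the same processes, the rest of the argument being identical. The most delicate step is the last one: passing from the Rademacher-weighted block sums $\sum_j \varepsilon_j W_j(f)$ back to the unsymmetrized empirical process. The grouped weights $\varepsilon_{g(i)}$ that appear after expanding $\sum_j \varepsilon_j W_j(f)$ are not independent across $i$, so one cannot symmetrize directly at the level of individual samples; the resolution is to keep the argument at the block level and exploit the fact that the $W_j$ are i.i.d.\ and centered, which is precisely what the disjoint-blocks hypothesis ensures.
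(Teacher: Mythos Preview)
Your proof is correct and follows essentially the same route as the paper: Klein--Rio concentration, then symmetrization, contraction, and desymmetrization at the block level to control the expected supremum. The only cosmetic differences are that the paper bounds the variance via Lemma~\ref{lemma:variance} rather than the $1$-Lipschitz argument you use, and that it applies contraction to the recentered functions $h_j(x)=-\bigl(\rho'(x-\sqrt{n}z/\Delta)-\rho'(-\sqrt{n}z/\Delta)\bigr)$ instead of to $\rho'$ directly; your observation that $\rho'(0)=0$ makes the direct route equally valid.
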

\begin{proof}
We will prove the first inequality, while the second follows similarly. 
First observe that in view of Lemma \ref{lemma:variance},  
\[
\var\l( \rho'\l( \sqrt{n}\frac{(\bar \theta_j(f) - Pf) - z}{\Delta} \r) \r) \leq \var\l( \sqrt{n}\frac{(\bar \theta_j(f) - Pf) - z}{\Delta} \r)
= \frac{\sigma^2(f)}{\Delta^2},
\]
hence $\sup_{f\in \m F} \var^{1/2}\l( \rho'\l( \sqrt{n}\frac{(\bar \theta_j(f) - Pf) - z}{\Delta} \r) \r) \leq \frac{\sigma(\m F)}{\Delta}$. Next, Fact \ref{fact:ineq} (more specifically, inequality \eqref{eq:klein2}) implies that for any fixed $z\in \mb R$,
\begin{multline*}
\inf_{f\in \m F} \sum_{j=1}^k \frac{1}{\sqrt{k}} \l( \rho'\l( \sqrt{n}\frac{(\bar \theta_j(f) - Pf) - z}{\Delta} \r) - \mb E\rho'\l(\sqrt{n} \frac{(\bar \theta_j(f) - Pf) - z}{\Delta} \r)  \r) \\
\geq 
-2\mb E\sup_{f\in \m F}\frac{1}{\sqrt{k}} \sum_{j=1}^k  \l( \mb E\rho'\l(\sqrt{n} \frac{(\bar \theta_j(f) - Pf) - z}{\Delta} \r) -  \rho'\l( \sqrt{n}\frac{(\bar \theta_j(f) - Pf) - z}{\Delta} \r) \r) 
\\
- \frac{\sigma(\m F)}{\Delta}\sqrt{2s} - 8\l\|\rho' \r\|_\infty \frac{s}{3\sqrt{k}}
\end{multline*}
with probability at least $1 - e^{-s}$. 
It follows from Remark \ref{rem:sup} that $\|\rho'\|_\infty \leq 2$, hence it remains to estimate the expected supremum. 
To this end, we will apply symmetrization and Talagrand's contraction inequalities 
\footnote{We use the versions of these inequalities without absolute values inside the supremem; see \cite{medina2014learning} for the proof.}
\cite{kolt6,LT-book-1991,medina2014learning} (where functions 
$h_j(x) := -\frac{\rho'\l(x - \sqrt{n}z/\Delta\r) - \rho'\l( - \sqrt{n}z/\Delta\r)}{L(\rho')}$ are contractions satisfying $h_j(0)=0$). 
Let $\eps_1,\ldots,\eps_k$ be i.i.d. Rademacher random variables independent of $X_1,\ldots,X_N$, and note that 
\begin{multline*}
\mb E\sup_{f\in \m F}\frac{1}{\sqrt{k}} \sum_{j=1}^k \l(  -\rho'\l( \sqrt{n}\frac{(\bar \theta_j(f) - Pf) - z}{\Delta} \r) + \mb E\rho'\l(\sqrt{n} \frac{(\bar \theta_j(f) - Pf) - z}{\Delta} \r)  \r)  \\
\leq 2 L(\rho') \, \mb E\sup_{f\in \m F}\frac{1}{\sqrt{k}} \sum_{j=1}^k \eps_j  h_j\l( \sqrt{n}\frac{\bar \theta_j(f) - Pf}{\Delta} \r) 
\leq \frac{4 }{\Delta} \, \mb E\sup_{f\in \m F} \sum_{j=1}^k \eps_j \frac{\sqrt{n}}{\sqrt{k}} (\bar\theta_j(f) - Pf),
\end{multline*}
where we used the fact that $L(\rho')\leq 1$. Next, desymmetrization inequality \cite{kolt6} implies that 
\[
\mb E\sup_{f\in \m F} \sum_{j=1}^k \eps_j \frac{\sqrt{n}}{\sqrt{k}} (\bar\theta_j(f) - Pf) \leq 
\frac{2}{\sqrt{N}} \mb E\sup_{f\in \m F} \sum_{j=1}^N \l( f(X_j) - Pf \r).
\]
and the result follows. 
\end{proof}

\begin{lemma}
\label{lemma:step2}
Assume that $\mb E|f(X) - \mb Ef(X)|^{2}<\infty$ for all $f\in \m F$ and that $\rho$ satisfies Assumption \ref{ass:1}. Then for all $f\in \m F$ and $z\in \mb R$ satisfying $|z|\leq \frac{1}{2}\frac{\Delta}{\sqrt{n}}$,
\[
\l|\mb E\rho'\l(\sqrt{n} \frac{(\bar \theta_j(f) - Pf) - z}{\Delta} \r) - 
\mb E \rho'\l( \frac{W(f) - \sqrt{n} z }{\Delta} \r) \r|  \leq 
2 \,G_f(n,\Delta). 
\]
\end{lemma}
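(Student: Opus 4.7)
The strategy is Stieltjes integration by parts combined with the Berry--Esseen bound \eqref{eq:BE-main}. Write $Y_n := \sqrt{n}(\bar\theta_j(f) - Pf)$, which has mean zero and variance $\sigma^2(f)$, and set $\psi(y) := \rho'((y - \sqrt{n}z)/\Delta)$. By Remark \ref{rem:sup} and Assumption \ref{ass:1}, $\|\psi\|_\infty \leq 2$, and the a.e.\ derivative $\psi'(y) = \Delta^{-1}\rho''((y - \sqrt{n}z)/\Delta)$ satisfies $0 \leq \rho'' \leq 1$ (convexity of $\rho$ together with $L(\rho') = 1$), $\rho'' = 1$ on $(-1,1)$ (since $\rho'(z) = z$ there), and $\rho'' = 0$ off $[-2,2]$ (since $\rho'$ is constant there); in particular $\psi$ is eventually constant. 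Stieltjes integration by parts with vanishing boundary terms therefore gives
\[
\mb E\psi(Y_n) - \mb E\psi(W(f)) \;=\; -\int \bigl[F_{Y_n}(y) - \Phi(y/\sigma(f))\bigr]\,\psi'(y)\,dy.
\]

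Inserting the Berry--Esseen bound $|F_{Y_n}(y) - \Phi(y/\sigma(f))| \leq g_f(y,n)$ from \eqref{eq:BE-main} and substituting $u = (y - \sqrt{n}z)/\Delta$ reduces the problem to estimating
\[
\int_{-2}^{2} \rho''(u)\, g_f(\Delta u + \sqrt{n}z, n)\, du
\]
in terms of $G_f(n,\Delta) = \int_{1/2}^\infty g_f(\Delta v, n)\,dv$. The hypothesis $|\sqrt{n}z| \leq \Delta/2$ is precisely what makes this matching possible: for $|u| \geq 1/2$ one has $|\Delta u + \sqrt{n}z| \geq \Delta(|u| - 1/2)$, so the monotonicity of $g_f(\cdot,n)$ in $|\cdot|$ from Lemma \ref{lemma:BE-nonunif} yields $g_f(\Delta u + \sqrt{n}z, n) \leq g_f(\Delta(|u| - 1/2), n)$ on this tail. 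Combined with the evenness of $g_f$ and of $\rho''$, and the change of variable $w = |u| - 1/2$, the $|u| \geq 1/2$ contribution is bounded by $2\int_0^{3/2} g_f(\Delta(1/2 + w), n)\,dw \leq 2\,G_f(n,\Delta)$.

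The main technical step is absorbing the middle region $|u| < 1/2$, where $|\Delta u + \sqrt{n}z|$ may be arbitrarily small and the naive monotonicity argument yields only $g_f(0,n)$, which need not by itself be controlled by $G_f$. I would deal with this by setting $\alpha := \sqrt{n}z/\Delta \in [-1/2, 1/2]$ and using the evenness of $g_f$ in its first argument to fold the full integral as
\[
\int_0^\infty g_f(\Delta w, n)\bigl[\rho''(w - \alpha) + \rho''(w + \alpha)\bigr]\,dw,
\]
then invoking $\rho''(w-\alpha) + \rho''(w+\alpha) \leq 2$, the compact support of $\rho''$ (which confines the integration to $w \in [0, 2 + |\alpha|]$), and a careful accounting of the ranges to recover the stated bound $2\,G_f(n,\Delta)$.
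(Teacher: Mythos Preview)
Your integration-by-parts reduction is correct up to the point where you must bound
\[
\int_{-2}^{2}\rho''(u)\,g_f(\Delta u+\sqrt n\,z,n)\,du,
\]
but the final step fails. After your fold you arrive at $\int_0^\infty g_f(\Delta w,n)\bigl[\rho''(w-\alpha)+\rho''(w+\alpha)\bigr]\,dw$, and bounding the bracket by $2$ yields $2\int_0^{2+|\alpha|} g_f(\Delta w,n)\,dw$. This is \emph{not} controlled by $2G_f(n,\Delta)=2\int_{1/2}^\infty g_f(\Delta w,n)\,dw$: the extra piece $\int_0^{1/2} g_f(\Delta w,n)\,dw$ is essentially $\tfrac12\,g_f(0,n)$ for large $\Delta$, a quantity that does not depend on $\Delta$, whereas $G_f(n,\Delta)$ can be made arbitrarily small by taking $\Delta$ large (for instance $G_f(n,\Delta)\lesssim \Delta^{-2-\delta}$ under a $(2+\delta)$-moment). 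No ``careful accounting of the ranges'' can rescue this, because the problem is that $\rho''\equiv 1$ on $(-1,1)$ forces you to integrate the Berry--Esseen remainder over a neighbourhood of the origin, and that contribution is simply absent from the definition of $G_f$.

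The missing idea is to exploit that both $Y_n$ and $W(f)$ have mean zero. Writing $T(x)=x-\rho'(x)$, one has
\[
\mb E\rho'\!\l(\frac{Y_n-\sqrt n\,z}{\Delta}\r)-\mb E\rho'\!\l(\frac{W(f)-\sqrt n\,z}{\Delta}\r)
=-\l(\mb E\,T\!\l(\frac{Y_n-\sqrt n\,z}{\Delta}\r)-\mb E\,T\!\l(\frac{W(f)-\sqrt n\,z}{\Delta}\r)\r),
\]
because the linear part contributes $\mb E Y_n=\mb E W(f)=0$. The gain is that $T$ vanishes on $[-1,1]$ by Assumption~\ref{ass:1}(i), so in your integration-by-parts picture the weight becomes $T'(u)=1-\rho''(u)$, which is zero on $(-1,1)$ and bounded by $1$. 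The integral is then supported on $|u|\ge 1$, i.e.\ on $|y-\sqrt n\,z|\ge\Delta$, hence $|y|\ge\Delta/2$ by the hypothesis $|\sqrt n\,z|\le\Delta/2$; this matches the lower limit in $G_f$ and gives the bound $2G_f(n,\Delta)$ directly. The paper carries this out via a layer-cake representation for the monotone function $T$ rather than literal integration by parts, but the mechanism is the same.
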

\begin{proof}
Let $T(x) = x - \rho'(x)$, and note that $T(x)=0$ for $|x| \leq 1$ by Assumption \ref{ass:1}. Moreover, $T$ is non-decreasing. As 
\[
\mb E\l(\sqrt{n} \frac{(\bar \theta_j(f) - Pf) - z}{\Delta} - \frac{W(f) - \sqrt{n} z }{\Delta}\r)=0,
\] 
it is easy to check that
\begin{multline*}
\l|\mb E\rho'\l(\sqrt{n} \frac{(\bar \theta_j(f) - Pf) - z}{\Delta} \r) - 
\mb E \rho'\l( \frac{W(f) - \sqrt{n} z }{\Delta} \r) \r| 
\\
= \l| \mb E \l( T\l( \sqrt{n} \frac{(\bar \theta_j(f) - Pf) - z}{\Delta}\r) - T\l(  \frac{W(f) - \sqrt{n} z }{\Delta}  \r) \r) \r|.
\end{multline*}
For any bounded non-negative function $h:\mb R\mapsto \mb R_+$ and any signed measure $Q$, 
\begin{align*}
\l| \int_{\mb R} h(x)dQ \r| = \l| \int_{0}^{\infty} Q\l( x: \,h(x)\geq t \r) dt \r|.
\end{align*} 
Since any bounded function $h:\mb R\mapsto \mb R$ can be written as $h = h_+ - h_-$, where  $h_+=\max(h,0)$ and $h_- = \max(-h, 0)$ are both nonnegative, we deduce that 
\begin{equation}
\label{eq:c1}
\l| \int_{\mb R} h(x)dQ \r|\leq  \int_{0}^{\infty} |Q\l( x: \,h_+(x)\geq t \r)| dt +  \int_{0}^{\infty} |Q\l( x: \,h_-(x)\geq t \r)| dt. 
\end{equation} 
Moreover, if $h$ is monotone, the sets $\{x: \,h_+(x)\geq t\}$ and $\{x: \,h_-(x)\geq t\}$ are half-intervals. 
Take $h=T$; it follows from Assumption \ref{ass:1} that $T_+(x) \leq \max(x - 1,0)$, hence 
$T_+^{-1}(t)\geq 1 + t$ for $t > 0$. 
Let $\Phi_f^{(n)}(\cdot)$ stand for the cumulative distribution function of $\sqrt{n}\l( \bar\theta_j(f) - Pf \r)$. 
Applying \eqref{eq:c1} to the monotone function $x\mapsto T\l( \frac{x- \sqrt{n}z}{\Delta}\r)$ 
and $Q(\cdot)=\Phi_f^{(n)}(\cdot) - \Phi \l(\cdot/\sigma(f)\r)$, we deduce that for any $f\in \m F$,
\begin{multline}
\label{eq:decomp}
\l| \mb E \l( T\l( \sqrt{n} \frac{(\bar \theta_j(f) - Pf) - z}{\Delta}\r) - T\l(  \frac{W(f) - \sqrt{n} z }{\Delta}  \r) \r) \r| 
\\
\leq \int_0^\infty \l| Q\l( x: T_+\l( \frac{x - \sqrt{n}z}{\Delta} \r) \geq t\r) \r| dt + \int_0^\infty \l| Q\l( x: T_-\l( \frac{x - \sqrt{n}z}{\Delta} \r) \geq t\r) \r| dt.
\end{multline}
Inequality \eqref{eq:BE-main} implies that
\begin{multline}
\label{eq:key-ineq}
 \l| Q\l( x: T_+\l( \frac{x - \sqrt{n}z}{\Delta} \r) \geq t\r) \r| =  \l| Q\l( x: x \geq \Delta \, T_+^{-1}(t) + \sqrt{n}z \r) \r|
 \\
 \leq g_f\big( \Delta \, T_+^{-1}(t) + \sqrt{n}z,n\big) \leq 
 g_f\l(\Delta\l( \frac{1}{2} + t \r),n \r)
 \end{multline}
where we used monotonicity of $g_f$ together with the inequalities $\sqrt{n}|z|\leq \frac{1}{2}\Delta$ and $T_+^{-1}(t)\geq 1 + t$ on the last step. 
Integrating inequality \eqref{eq:key-ineq} from $0$ to $\infty$, we see that 
\begin{equation*}
\int_0^\infty \l| Q\l( x: T_+\l( \frac{x - \sqrt{n}z}{\Delta} \r) \geq t\r) \r| dt \leq 
\int_{0}^{\infty} g_f\l(\Delta\l( \frac{1}{2} + t \r),n \r) dt = G_f(n,\Delta).
\end{equation*}
Similarly, $\int_0^\infty \l| Q\l( x: T_-\l( \frac{x - \sqrt{n}z}{\Delta} \r) \geq t\r) \r| dt \leq G_f(n,\Delta)$, hence the conclusion follows from \eqref{eq:decomp}. 
\end{proof}

\begin{lemma}
\label{lemma:step3}
Let $\eps>0$ be such that $\eps \leq 0.045$, 
and set 
\[
z_1 = -\frac{\eps}{ 0.09}\, \frac{\max\l(\Delta,\sigma(\m F) \r)}{\sqrt{n}}. 
\]
Then for all $f\in \m F$,
\begin{equation*}
\mb E \rho'\l( \frac{W(f) - \sqrt{n} z_1 }{\Delta} \r) > \eps.
\end{equation*}
\end{lemma}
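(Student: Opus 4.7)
The claim reduces to a clean statement about a shifted Gaussian. Let $\tau := \sigma(f)/\Delta$ and $r := \max(1, \sigma(\m F)/\Delta) \geq 1$, so that $\tau\in[0,r]$ and $c := -\sqrt{n}\,z_1/\Delta = (\eps/0.09)\, r \geq \eps/0.09$. Writing $W(f)/\Delta = \tau Z$ with $Z\sim N(0,1)$, the inequality to be proved becomes $\mb E \rho'(\tau Z + c) > \eps$. The degenerate case $\tau = 0$ is immediate since $\rho'(c) \geq \min(c,1) \geq \eps/0.09 > \eps$, so assume $\tau > 0$ below.

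The plan is to exploit the symmetry of $Z$ together with the oddness of $\rho'$: a change of variables $Z\mapsto -Z$ combined with $\rho'$ odd yields $\mb E\rho'(\tau Z-c) = -\mb E\rho'(\tau Z+c)$, hence
\[
\mb E \rho'(\tau Z + c) = \tfrac{1}{2}\mb E\l[\rho'(\tau Z + c) - \rho'(\tau Z - c)\r] = \tfrac{1}{2}\mb E\int_{\tau Z - c}^{\tau Z + c}\rho''(t)\,dt.
\]
Assumption \ref{ass:1}(i) gives $\rho''(t) \geq I\{|t|\leq 1\}$ almost everywhere, so the right-hand side is at least $\tfrac{1}{2}\mb E\, L(\tau Z,c)$ with $L(x,c) := |[x-c,x+c]\cap[-1,1]|$. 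A short case check (splitting on $c\leq 1$ vs.\ $c\geq 1$, and noting that in either regime the intersection has length at least $\min(c,1)$ whenever $|x|\leq \max(c,1)$) yields the geometric bound
\[
L(x,c) \geq \min(c,1)\cdot I\{|x|\leq \max(c,1)\},
\]
and therefore $\mb E \rho'(\tau Z + c) \geq \tfrac{1}{2}\min(c,1)\l[2\Phi(\max(c,1)/\tau) - 1\r]$.

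Applying the elementary normal lower bound $2\Phi(x) - 1 \geq 2x\phi(x)$ for $x\geq 0$, using $\tau \leq r$, and splitting into two subcases finishes the proof. If $c \leq 1$, then $\max(c,1)/\tau \geq 1/r \leq 1$, and monotonicity of $\phi$ on $[0,\infty)$ gives $\phi(1/r)\geq \phi(1)$, whence $\mb E \rho'(\tau Z + c) \geq (c/r)\phi(1) = (\eps/0.09)\phi(1)$. If $c \geq 1$, then $\max(c,1)/\tau = c/\tau \geq c/r = \eps/0.09 \leq 1/2$, and $\phi(\eps/0.09) \geq \phi(1/2)$ gives $\mb E \rho'(\tau Z + c) \geq (\eps/0.09)\phi(1/2)$. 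Since $\phi(1) > 0.24 > 0.09$ and $\phi(1/2) > 0.35 > 0.09$, both bounds strictly exceed $\eps$.

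The main subtlety is in the geometric estimate on $L(x,c)$: the sharper and more natural bound $L(x,c)\geq 2\min(c,1)\,I\{|x|\leq |c-1|\}$ degenerates at $c=1$, precisely at the interface between the two subcases of the case analysis. Replacing it with the weaker but continuous bound $L(x,c)\geq \min(c,1)\,I\{|x|\leq \max(c,1)\}$ is what yields a single uniform estimate and avoids a separate transition analysis near $c\approx 1$.
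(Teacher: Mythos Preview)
Your proof is correct and takes a genuinely different route from the paper's. The one expository wrinkle is in the last paragraph: as written, the order in which you invoke ``$\tau\le r$'' versus the inequality $2\Phi(x)-1\ge 2x\phi(x)$ is ambiguous, and applying the latter first at $x=\max(c,1)/\tau$ would fail for small $\tau$ (since $x\phi(x)\to 0$ as $x\to\infty$). The intended argument---first use $\tau\le r$ and monotonicity of $\Phi$ to pass from $2\Phi(\max(c,1)/\tau)-1$ to $2\Phi(\max(c,1)/r)-1$, and only then apply $2\Phi(x)-1\ge 2x\phi(x)$ at the smaller argument $x=\max(c,1)/r$---is sound and produces exactly the bounds $(c/r)\phi(1)$ and $(\eps/0.09)\phi(1/2)$ you state. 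Writing the chain out in order would remove any doubt.

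The paper's proof is structurally quite different. It decomposes $\rho'(x)=\tfrac{x}{2}I\{|x|\le 2\}+h(x)$, observes that both summands have nonnegative expectation under the shifted Gaussian, and then handles the cases $\Delta\ge\sigma(f)$ and $\Delta<\sigma(f)$ separately: in the first case it does explicit Gaussian moment calculations for the truncated-linear term, and in the second it bounds $\mathbb{E}h$ via tail probabilities together with an Anderson-type shift inequality and a normal quantile lemma. By contrast, your argument uses the symmetrization identity $\mathbb{E}\rho'(\tau Z+c)=\tfrac{1}{2}\mathbb{E}\int_{\tau Z-c}^{\tau Z+c}\rho''$, the pointwise bound $\rho''\ge I\{|\cdot|\le 1\}$ (from convexity plus Assumption~\ref{ass:1}(i)), and the clean geometric lower bound on the intersection length. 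This avoids the $\Delta\lessgtr\sigma(f)$ case split and the auxiliary lemmas entirely, at the cost of not tracking the constants quite as sharply; indeed, your final inequalities $(\eps/0.09)\phi(1)>\eps$ and $(\eps/0.09)\phi(1/2)>\eps$ hold with room to spare, so the constant $0.09$ could be improved within your framework.
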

\begin{proof}
For any bounded function $h$ such that $h(-x)=-h(x)$ and $h(x)\geq 0$ for $x\geq 0$, and any $z\leq 0$,
\[
\int_\mb R h(x-z)\phi_\sigma(x) dx = \int_0^\infty h(x)\l( \phi_\sigma(x+z) - \phi_\sigma(-x+z)\r)dx \geq 0, 
\] 
where $\phi_\sigma(x) = (2\pi\sigma)^{-1/2}e^{-x^2/2\sigma^2}$. 
Recall that $H'(x) \geq \frac{x}{2}$ for $0\leq x\leq 2$, and take 
\[
h(x) := \rho'(x) - \frac{x}{2} I\{|x|\leq 2 \} = H'(x) - \frac{x}{2} I\l\{ |x|\leq 2\r\}.
\] 
Observe that $h(x)\geq 0$ for $x\geq 0$ by assumptions on $\rho$, hence for any $j$,
\begin{multline}
\label{eq:exp0}
 \mb E \rho'\l( \frac{W(f) - \sqrt{n}\, z_1 }{\Delta} \r)
  = \frac{1}{2}\mb E\l(  \frac{W(f) - \sqrt{n}\, z_1 }{\Delta} I\l\{ \l|  \frac{W(f) - \sqrt{n}\, z_1 }{\Delta} \r| \leq 2 \r\}\r) 
 + \mb E h\l(  \frac{W(f) - \sqrt{n}\, z_1 }{\Delta} \r) 
 \\
\geq \max\l( \frac{1}{2}\mb E\l(  \frac{W(f) - \sqrt{n}\, z_1 }{\Delta} I\l\{ \l|  \frac{W(f) - \sqrt{n}\, z_1 }{\Delta} \r| \leq 2 \r\}\r),
\, \mb E h\l(  \frac{W(f) - \sqrt{n}\, z_1 }{\Delta} \r) \r), 
\end{multline}
where we used the fact that both terms are nonnegative. Next, we will find lower bounds for each of the terms in the maximum above, starting with the first. 

\textbf{(1)} Consider two possibilities: (a) $\Delta < \sigma(f)$ and (b) $\Delta \geq \sigma(f)$. In the first case, we will use the trivial lower bound $\mb E\l(  \frac{W(f) - \sqrt{n}\, z_1 }{\Delta} I\l\{ \l|  \frac{W(f) - \sqrt{n}\, z_1 }{\Delta} \r| \leq 2 \r\}\r) \geq 0$. 
The main focus will be on the second case. To this end, note that $Z:=\frac{W(f)}{\sigma(f)}\sim N(0,1)$, hence
\begin{multline}
\label{eq:exp1}
\frac{1}{2}\mb E\l(  \frac{W(f) - \sqrt{n}\, z_1 }{\Delta} I\l\{ \l|  \frac{W(f) - \sqrt{n}\, z_1 }{\Delta} \r| \leq 2\r\}\r) 
\\
= \frac{\sigma(f)}{2\Delta}\mb E \l( Z \,I\l\{ \l| Z - \frac{\sqrt{n}\, z_1}{\sigma(f)} \r|\leq 2\frac{\Delta}{\sigma(f)} \r\} \r) - 
\frac{\sqrt{n}\, z_1}{2\Delta}\pr{\l| Z - \frac{\sqrt{n}\, z_1}{\sigma(f)} \r|\leq 2\frac{\Delta}{\sigma(f)}}.
\end{multline}
Direct computation shows that for any $a\in \mb R, \ t>0$,
\begin{equation}
\label{eq:exp}
\Big| \mb E \l( Z \, I\l\{ |Z-a|\leq t\r\}\r) \Big| = \frac{1}{\sqrt{2\pi}}e^{-\frac{a^2+t^2}{2}} \l| e^{at} - e^{-at} \r|.
\end{equation}
Take $a= - \frac{z_1\sqrt{n}}{\sigma(f)}$, $t=2\frac{\Delta}{\sigma(f)}$, and observe that assumptions of the Theorem imply the inequality $|a| \leq \frac{t}{4}$. 
Minimum of the function $a\mapsto a^2 + t^2 - 2|a|t$ over the set $0\leq a\leq t/4$ is attained at $a=t/4$, implying that  
$a^2 + t^2 - 2|a| t \geq \frac{9}{16}t^2>\frac{t^2}{2}$.  
Combining this with \eqref{eq:exp}, we deduce that 
\begin{equation*}
\Big| \mb E \l( Z \, I\l\{ |Z-a|\leq t\r\}\r) \Big| \leq  \frac{1}{\sqrt{2\pi}}e^{-t^2/4} e^{-|at|}\l| e^{at} - e^{-at}\r|
= \frac{e^{-t^2/4}}{\sqrt{2\pi}}\l( 1 - e^{-2|at|}\r)\leq \frac{e^{-t^2/4}}{\sqrt{2\pi}}\cdot 2|at|,
\end{equation*}
hence
\begin{equation*}
\Bigg| \frac{\sigma(f)}{2\Delta}\mb E \l( Z \,I\l\{ \l| Z - \frac{\sqrt{n}\, z_1}{\sigma(f)} \r|\leq 2\frac{\Delta}{\sigma(f)} \r\} \r) \Bigg| \leq \frac{2}{\sqrt{2\pi}} \l|\frac{z_1\sqrt{n}}{\sigma(f)}\r| e^{-\frac{\Delta^2}{\sigma^2(f)}}
= \frac{2}{\sqrt{2\pi}} \l|\frac{z_1\sqrt{n}}{\Delta}\r| \frac{\Delta}{\sigma(f)}e^{-\frac{\Delta^2}{\sigma^2(f)}}.
\end{equation*}
Moreover, since $|z_1|\leq \frac{1}{2}\frac{\Delta}{\sqrt{n}}$ by assumptions of the lemma, it follows that 
\[
\pr{\l| Z - \frac{\sqrt{n}\, z_1}{\sigma(f)} \r|\leq 2\frac{\Delta}{\sigma(f)}} \geq
\pr{\l| Z\r|\leq \frac{3\Delta}{2\sigma(f)} }\geq 1 - 2\Phi(-3/2)>0.86.
\]
Together with \eqref{eq:exp0}, \eqref{eq:exp1}, the last display yields that 
\[
 \mb E \rho'\l( \frac{W(f) - \sqrt{n}\, z_1 }{\Delta} \r) > \l|\frac{0.86}{2} \frac{z_1\sqrt{n}}{\Delta} \r| - 
 \frac{2}{\sqrt{2\pi}}\l|\frac{z_1\sqrt{n}}{\Delta}\r| \frac{\Delta}{\sigma(f)} e^{-\frac{\Delta^2}{\sigma^2(f)}}.
\]
As $x\mapsto x e^{-x^2}$ is decreasing for $x\geq 1/\sqrt{2}$, one easily checks that $\frac{\Delta}{\sigma(f)} e^{-\frac{\Delta^2}{\sigma^2(f)}}\leq e^{-1}$ as $\Delta \geq \sigma(f)$, hence  
\begin{equation*}
\mb E \rho'\l( \frac{W(f) - \sqrt{n}\, z_1 }{\Delta} \r) > \l( 0.43 -  \frac{2}{e\sqrt{2\pi}} \r)|z_1| \frac{\sqrt{n}}{\Delta} > 0.1364 |z_1| \frac{\sqrt{n}}{\Delta}.
\end{equation*}

\textbf{(2)} For the second term, we start with a simple inequality 
\begin{multline*}
\mb E h\l(  \frac{W(f) - \sqrt{n}\, z_1 }{\Delta} \r) \geq \mb E \rho'\l(  \frac{W(f) - \sqrt{n}\, z_1 }{\Delta} \r) \, 
I\l\{ \l|  \frac{W(f) - \sqrt{n}\, z_1 }{\Delta} \r| > 2 \r\} 
\\
\geq \underbrace{\rho'(2)}_{\geq 1} \, \mb E\l( I\l\{ \frac{W(f) - \sqrt{n}\, z_1 }{\Delta} > 2 \r\} - I\l\{ \frac{W(f) - \sqrt{n}\, z_1 }{\Delta} < -2 \r\} \r)
\end{multline*}
which follows from the definition of $h$ and assumptions on $\rho$. 
Again, we consider two possibilities: (a) $\Delta < \sigma(f)$ and (b) $\Delta \geq \sigma(f)$. In case (b), we use the trivial bound 
\[
\mb E\l( I\l\{ \frac{W(f) - \sqrt{n}\, z_1 }{\Delta} > 2 \r\} - I\l\{ \frac{W(f) - \sqrt{n}\, z_1 }{\Delta} < -2 \r\} \r) \geq 0.
\] 
In the first case, we see that 
\begin{multline*}
\Pr\l( \frac{W(f) - \sqrt{n}\, z_1 }{\Delta} \geq 2 \r) - \Pr\l( \frac{W(f) - \sqrt{n}\, z_1 }{\Delta} \leq -2  \r) 
\\
= \Pr\l( Z \geq \frac{\sqrt{n}z_1}{\sigma(f)} + 2\frac{\Delta}{\sigma(f)} \r) - 
\Pr\l( Z \leq \frac{\sqrt{n}z_1}{\sigma(f)} - 2\frac{\Delta}{\sigma(f)}\r)
\\ 
= \Pr\l( Z \in \l[ \frac{\sqrt{n}\,z_1}{\sigma(f)} + 2\frac{\Delta}{\sigma(f)}, -\frac{\sqrt{n}\, z_1}{\sigma(f)} + 2\frac{\Delta}{\sigma(f)} \r]\r).
\end{multline*}
Lemma \ref{lemma:normal} implies that 
\begin{multline*}
\Pr\l( Z \in \l[ \frac{\sqrt{n}\, z_1}{\sigma(f)} + 2\frac{\Delta }{\sigma(f)}, -\frac{\sqrt{n}\, z_1}{\sigma(f)} + 2\frac{\Delta}{\sigma(f)} \r]\r) \geq 2 e^{-\frac{2\Delta^2}{\sigma^2(f)}} \Pr\l( Z\in \l[0, \frac{\sqrt{n}|z_1|}{\sigma(f)}\r] \r)
\\
\geq 2 e^{-2}\Pr\l( Z\in \l[0, \frac{\sqrt{n}|z_1|}{\sigma(f)}\r] \r),
\end{multline*}
where we used the fact that $\Delta < \sigma(f)$ by assumption. 
Finally, Lemma \ref{lemma:quantile} implies that 
\[
\Pr\l( Z\in \l[0, \frac{\sqrt{n}|z_1|}{\sigma(f)}\r] \r) > \frac{1}{3}\frac{\sqrt{n}|z_1|}{\sigma(f)}
\]
whenever $|z_1|\leq 0.99 \frac{\sigma(f)}{\sqrt{n}}$. 
In conclusion, we demonstrated that in case (a) 
\[
\mb E h\l(  \frac{W(f) - \sqrt{n}\, z_1 }{\Delta} \r) > \frac{2e^{-2}}{3} |z_1|\frac{\sqrt{n}}{\sigma(f)} > 0.09 |z_1|\frac{\sqrt{n}}{\sigma(f)}. 
\]
Combining results \textbf{(1)} and \textbf{(2)} for both terms in the maximum \eqref{eq:exp0}, we see that for any $\Delta > 0$, 
\begin{equation}
\label{eq:finalBound}
\mb E \rho'\l( \frac{W(f) - \sqrt{n}\, z_1 }{\Delta} \r) > \min\l( 0.1364,0.09\r) |z_1| \frac{\sqrt{n}}{\max(\Delta,\sigma(f))}
= 0.09 |z_1| \frac{\sqrt{n}}{\max(\Delta,\sigma(f))}
\end{equation}
given that $|z_1|\leq \frac{1}{2} \frac{\max\l(\Delta ,\sigma(f)\r)}{\sqrt{n}}$. 
Let $\eps>0$. It is easy to check that setting 
\begin{equation*}
z_1 = -\frac{1}{0.09} \max\l(\Delta,\sigma(f)\r) \frac{\eps}{\sqrt{n}}
\end{equation*}
yields, in view of \eqref{eq:finalBound}, that
\[
\mb E \rho'\l( \frac{W(f) - \sqrt{n} z }{\Delta} \r) > \eps,
\]
as long as condition $|z_1|\leq \frac{1}{2}\frac{\max\l(\Delta,\sigma(f)\r)}{\sqrt{n}}$ holds for all $j$. The latter is equivalent to requirement that 
$\eps \leq \frac{0.09}{2}$. 
\end{proof}

\subsection{Proof of Theorem \ref{th:unif-U}.}
\label{proof:unif-U}

Suppose $z_1,z_2$ are such that on an event of probability close to $1$, $U'_{N,n}(z_1;f) > 0$ and $U'_{N,n}(z_2;f) < 0$ for all $f\in \m F$ simultaneously. 
For such $z_1,z_2$, it is easy to see that $\widetilde \theta^{(k)}(f) - Pf \in (z_1,z_2)$ for all $f\in \m F$ on the corresponding event. 
Observe that 
\begin{multline*}
U'_{N,n}(z;f) = \frac{1}{N!}\sum_{(i_1,\ldots,i_N)\in \pi_N} T_{i_1,\ldots,i_N}(z;f) - \mb E \, T_{i_1,\ldots,i_N}(z;f) 
\\ 
+ \mb E  \rho'\l(\sqrt{n}\,\frac{\bar \theta(f;\{1,\ldots,n\}) - Pf - z}{\Delta}\r) 
- \mb E \rho'\l( \frac{W(f) - \sqrt{n} z }{\Delta} \r)  + 
\mb E \rho'\l( \frac{W(f) - \sqrt{n} z }{\Delta} \r).
\end{multline*}
The rest of the proof mimics the steps in the proof of Theorem \ref{th:unif}. 
Namely, we will find positive $\eps_1, \eps_2$ such that 
\begin{equation}
\label{eq:eps-1.2}
\inf_{f\in \m F} \frac{1}{N!}\sum_{(i_1,\ldots,i_N)\in \pi_N} T_{i_1,\ldots,i_N}(z;f) - \mb E \, T_{i_1,\ldots,i_N}(z;f)  
=  \inf_{f\in \m F} \l( U'_{N,n}(z;f) - \mb E U'_{N,n}(z;f) \r) \geq -\eps_1
\end{equation}
with high probability and 
\[
\inf_{f\in \m F} \mb E  \rho'\l(\sqrt{n}\,\frac{\bar \theta(f;\{1,\ldots,n\}) - Pf - z}{\Delta}\r) 
- \mb E \rho'\l( \frac{W(f) - \sqrt{n} z }{\Delta} \r) \geq -\eps_2
\] 
and will choose $z$ such that $\mb E \rho'\l( \frac{W(f) - \sqrt{n} z }{\Delta} \r)>\eps_1+\eps_2$. 

In view of \eqref{eq:U-concentr} and the fact that $\|\rho'\|_\infty\leq 2$, $\eps_1$ can be chosen as 
\[
\eps_1 = 2\mb E\sup_{f\in \m F}\Big( T_{1,\ldots,N}(z;f) - \mb E T_{1,\ldots,N}(z;f) \Big)
+ \frac{\sigma(f)}{\Delta}\sqrt{\frac{2s}{k}} + \frac{16s}{3k}
\]
for which inequality \eqref{eq:eps-1.2} holds with probability at least $1-e^{-s}$. 
It is also easy to see, following the symmetrization-contraction argument of Lemma \ref{lemma:step1}, that 
\[
\mb E \sup_{f\in \m F}\Big( T_{1,\ldots,N}(z;f) -\mb E T_{1,\ldots,N}(z;f)\Big) \leq
\frac{8}{\Delta \sqrt{k}} \, \mb E\sup_{f\in \m F} \l|\frac{1}{\sqrt{nk}} \sum_{j=1}^{nk} \l( f(X_j) - Pf \r)\r|. 
\] 
Lemma \ref{lemma:step2} implies that 
\[
\sup_{f\in \m F} \l| \mb E  \rho'\l(\sqrt{n}\,\frac{\bar \theta(f;\{1,\ldots,n\}) - Pf - z}{\Delta}\r) 
- \mb E \rho'\l( \frac{W(f) - \sqrt{n} z }{\Delta} \r) \r| \leq 2\sup_{f\in\m F} G_f(n,\Delta):= \eps_2.
\]
Finally, equation \eqref{eq:finalBound} in the proof of Lemma \ref{lemma:step3} yields that, for $z<0$,  
\[
\mb E \rho'\l( \frac{W(f) - \sqrt{n}\, z }{\Delta} \r) > 0.09 |z| \frac{\sqrt{n}}{\max(\Delta,\sigma(f))}
\]
as long as $|z|\leq \frac{1}{2}\frac{\max\l(\Delta,\sigma(f)\r)}{\sqrt{n}}$. 
Hence, choosing $z_1:= -\frac{1}{0.09}\frac{\max\l(\Delta,\sigma(\m F)\r)}{\sqrt{n}}\l( \eps_1 + \eps_2\r)$ implies that $U'_{N,n}(z_1;f)>0$ for all $f\in \m F$ simultaneously with probability at least $1-e^{-s}$. Similarly, $z_2=-z_1$ satisfies $U'_{N,n}(z_2;f)<0$ for all $f\in \m F$ with the same probability, and the claim follows.

\subsection{Proof of Theorem \ref{th:unif2}.}
\label{proof:unif2}

The proof closely follows the steps of the proof of Theorem \ref{th:unif}. 
All the probabilities below are evaluated conditionally on $N_J$ (see section \ref{sec:adversary} for the definition). 
Recall that  
\[
G_k(z;f) =  \frac{1}{\sqrt k} \sum_{j=1}^k \rho'\l( \sqrt{n}\,\frac{ (\bar \theta_j(f) - Pf) - z}{\Delta} \r).
\]
We are looking for $z_1,z_2\in \mb R$ with $|z_1|,\, |z_2|$ as small as possible such that on an event of probability close to $1$, $G_k(z_1;f) > 0$ and $G_k(z_2;f) < 0$ for all $f\in \m F$ simultaneously. 
Observe that 
\[
G_k(z;f) = \frac{1}{\sqrt k} \sum_{j\in J} \rho'\l( \sqrt{n}\,\frac{ (\bar \theta_j(f) - Pf) - z}{\Delta} \r) + 
\frac{1}{\sqrt k} \sum_{j\notin J}\rho'\l( \sqrt{n}\,\frac{ (\bar \theta_j(f) - Pf) - z}{\Delta} \r).
\]
The second sum can be estimated as 
\[
\l| \frac{1}{\sqrt k} \sum_{j\notin J}  \rho'\l( \sqrt{n}\,\frac{ (\bar \theta_j(f) - Pf) - z}{\Delta} \r) \r|
\leq \|\rho'\|_\infty \sum_{j\notin J}\frac{1}{\sqrt k} \leq 2 \frac{\m O}{\sqrt k}.
\]
where we used the fact that $\|\rho'\|_\infty \leq 2$. 
For the first sum, we proceed as in the proof of Theorem \ref{th:unif} and decompose it as 
\begin{multline*}
\frac{1}{\sqrt k} \sum_{j\in J}  \rho'\l( \sqrt{n}\,\frac{ (\bar \theta_j(f) - Pf) - z}{\Delta} \r) 
\\
 = \frac{1}{\sqrt k} \sum_{j\in J}  \l(  \rho'\l( \sqrt{n}\frac{(\bar \theta_j(f) - Pf) - z}{\Delta} \r) - \mb E\rho'\l(\sqrt{n} \frac{(\bar \theta_j(f) - Pf) - z}{\Delta} \r)  \r)  
 \\ 
+ \frac{1}{\sqrt k} \sum_{j\in J}  \l( \mb E\rho'\l(\sqrt{n} \frac{(\bar \theta_j(f) - Pf) - z}{\Delta} \r) - 
\mb E \rho'\l( \frac{W(f) - \sqrt{n} z }{\Delta} \r) \r) 
\\
+\frac{1}{\sqrt k}  \sum_{j\in J} \mb E \rho'\l( \frac{W(f) - \sqrt{n} z }{\Delta} \r).
\end{multline*}
It follows from Lemma \ref{lemma:step1} that for any $z\in \mb R$,
\begin{multline*}
\inf_{f\in \m F} \frac{1}{\sqrt k}\sum_{j\in J} \l(  \rho'\l( \sqrt{n}\frac{(\bar \theta_j(f) - Pf) - z}{\Delta} \r) - \mb E\rho'\l(\sqrt{n} \frac{(\bar \theta_j(f) - Pf) - z}{\Delta} \r)  \r)  \\
=\sqrt{\frac{|J|}{k}} \inf_{f\in \m F} \frac{1}{\sqrt{|J|} }\sum_{j\in J}  \l(  \rho'\l( \sqrt{n}\frac{(\bar \theta_j(f) - Pf) - z}{\Delta} \r) - \mb E\rho'\l(\sqrt{n} \frac{(\bar \theta_j(f) - Pf) - z}{\Delta} \r)  \r)  
\geq -\eps_1
\end{multline*}
with probability at least $1 - e^{-s}$, where 
\[
\eps_1 = \sqrt{\frac{|J|}{k}}\l( \frac{8 }{\Delta\sqrt{N_J}} \, \mb E\sup_{f\in \m F} \sum_{j=1}^{N_J} \l( f(X_j) - Pf \r) 
+ \frac{\sigma(\m F)}{\Delta}\sqrt{2s} + \frac{16}{3}\frac{s}{\sqrt{|J|}} \r).
\]
Next, Lemma \ref{lemma:step2} implies that 
\begin{multline*}
\inf_{f\in \m F}  \sum_{j\in J} \frac{1}{\sqrt k}\l( \mb E\rho'\l(\sqrt{n} \frac{(\bar \theta_j(f) - Pf) - z}{\Delta} \r) - 
\mb E \rho'\l( \frac{W(f) - \sqrt{n} z }{\Delta} \r) \r) 
\\
= \sqrt{\frac{|J|}{k}} \inf_{f\in \m F} \sum_{j\in J} \frac{1}{\sqrt{|J|}} \l( \mb E\rho'\l(\sqrt{n} \frac{(\bar \theta_j(f) - Pf) - z}{\Delta} \r) - \mb E \rho'\l( \frac{W(f) - \sqrt{n} z }{\Delta} \r) \r)   
\leq \eps_2,
\end{multline*}
where $\eps_2 = 2\frac{|J|}{\sqrt{k}}\sup_{f\in \m F}G_f(n,\Delta)$. 
Finally we will choose $z_1<0$ such that for all $f\in \m F$,
\begin{equation*}
\frac{1}{\sqrt k}\sum_{j\in J}  \mb E \rho'\l( \frac{W(f) - \sqrt{n} z }{\Delta} \r) > \eps_1 + \eps_2 + 
2 \frac{\m O}{\sqrt k}.
\end{equation*}
Lemma \ref{lemma:step3} implies that it suffices to take 
\[
z_1 = -\frac{1}{0.09} \frac{\widetilde \Delta}{\sqrt n} \l( \eps_1 + \eps_2 + 2 \frac{\m O}{\sqrt k}\r)\frac{\sqrt k}{|J|}
\geq -C \frac{\widetilde \Delta}{\sqrt n}\frac{\eps_1 + \eps_2 + \m O/\sqrt{k} }{\sqrt k}.
\] 
To get the final form of the bound, observe that  $\mb E\sup_{f\in \m F} \sum_{j=1}^{N_J} \l( f(X_j) - Pf \r) \leq 
\mb E\sup_{f\in \m F}  \sum_{j=1}^{N} \l( f(X_j) - Pf \r)$ due to Jensen's inequality and that $N_J\geq N/2$ by assumption. 
Similarly, setting $z_2=-z_1$ guarantees that $G_k(z_2;f)<0$ for all $f\in \m F$ with probability at least $1 - e^{-s}$, hence the claim follows.

\subsection{Proof of Lemma \ref{lemma:BE-nonunif}.}
\label{proof:BE-nonunif}

For brevity, set $Y:=f(X) - Pf$, let $Y_1,\ldots,Y_n$ be i.i.d. copies of $Y$, and let $W(f)$ have normal distribution $N(0,\sigma^2(f))$. Theorem 2.2 in \cite{chen2001non} implies that for an absolute constant $C>0$ and any $t\in \mb R$, 
\begin{multline}
\label{eq:stein}
\l| \pr{\frac{1}{\sqrt{n}}\sum_{j=1}^n Y_j \geq t } - \pr{W(f) \geq t} \r| \leq g_f(t,n) 
\\
=C \l( \frac{\mb E Y^2 \, I\l\{ \frac{|Y|}{\sigma\sqrt{n}} >  1+|t/\sigma| \r\}}{\sigma^2 (1+|t/\sigma|)^2} + \frac{1}{\sqrt{n}}
\frac{\mb E |Y|^3 \, I\l\{ \frac{|Y|}{\sigma\sqrt{n}} \leq 1+|t/\sigma|\r\}}{\sigma^3 (1+|t/\sigma|)^3}\r).
\end{multline}
It is clear that $g_f(s,n)<g_f(t,n)$ for $t<s$,  $g_f(t,m)<g_f(t,n)$ for $n<m$, and that $g_f(t,n)\to 0$ as $|t|\to\infty$. To show that $g_f(t,n)$ converges to $0$ as $n\to\infty$, let $\{a_n\}_{n\geq 1}$ be any sequence such that $a_n\to\infty$, $a_n\leq \sqrt{n}$ and $a_n=o(\sqrt{n})$. Then 
\begin{multline*}
\frac{1}{\sqrt n}\mb E |Y|^3 \, I\l\{ |Y| \leq \sigma\sqrt{n}(1+|t/\sigma|)\r\} \leq 
\frac{1}{\sqrt n}\Big( \mb E|Y|^3 \, I\l\{ |Y| \leq \sigma \cdot a_n(1+|t/\sigma|)\r\} 
\\
+ \mb E|Y|^3 \, I\l\{ \sigma \cdot a_n(1+|t/\sigma|) \leq |Y| \leq \sigma\cdot\sqrt{n}(1+|t/\sigma|)\r\}\Big)
\\
\leq (\sigma + |t|)\l( \frac{a_n}{\sqrt{n}} \, \mb EY^2 + \mb E Y^2 I\l\{ |Y| \geq \sigma \cdot a_n(1+|t/\sigma|)\r\}\r),
\end{multline*}
hence 
\begin{equation*}
g_f(t,n) \leq \frac{C}{(\sigma+|t|)^2}\l( \mb E Y^2 \, I\l\{ |Y| >  a_n\l(\sigma + |t| \r)\r\} + \frac{a_n}{\sqrt n}\mb EY^2 \r)
\end{equation*}
where the latter expression converges to $0$ as $n\to\infty$.
Next, assume that $\mb E|Y|^{2+\delta}<\infty$ for some $\delta\in(0,1]$. 
Applying H\"{o}lder's inequality followed by Markov's inequality, we deduce that 
\begin{multline*}
\mb E \l(\frac{X^2}{\sigma^2} \, I\l\{ \frac{|X|}{\sigma\sqrt{n}} >  1+|t/\sigma| \r\} \r)\leq 
\l(\mb E |X/\sigma|^{2+\delta}\r)^{2/(2+\delta)} \l( \pr{\frac{|X|}{\sigma\sqrt{n}} > 1+ |t/\sigma|} \r)^{\delta/(2+\delta))}
\\
\leq \frac{\mb E |X/\sigma|^{2+\delta}}{n^{\delta/2}(1+|t/\sigma|)^{\delta}}.
\end{multline*}
Moreover, it is clear that 
\begin{equation*}
\mb E \l( \frac{|X|^3}{\sigma^3} \, I\l\{ \frac{|X|}{\sigma\sqrt{n}} \leq 1+|t/\sigma|\r\} \r) \leq \l( \sqrt{n}(1+|t/\sigma|)\r)^{1-\delta}\mb E |X/\sigma|^{2+\delta}.
\end{equation*}
Combining these inequalities with \eqref{eq:stein}, we deduce the bound \eqref{eq:g_f} for $g_f(t,n)$. 
Finally, an upper bound for $G_f(n)$ follows by integrating the inequality \eqref{eq:g_f}. 


\section{Supplementary results.}
\label{section:supplement}

\begin{lemma}
\label{lemma:quantile}
Assume that $0\leq \alpha \leq 0.33$ and let $z(\alpha)$ be such that $\Phi(z(\alpha))-1/2 = \alpha$. 
Then $z(\alpha)\leq 3\alpha$. 
\end{lemma}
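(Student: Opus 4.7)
The plan is to change variables to $z$ and exploit convexity of the normal quantile function. Setting $z(\alpha) = \Phi^{-1}(1/2+\alpha)$ and differentiating the defining identity $\Phi(z(\alpha)) - 1/2 = \alpha$ twice, I would obtain $z'(\alpha) = 1/\phi(z(\alpha))$ and, using $\phi'(z) = -z\phi(z)$,
\[
z''(\alpha) \;=\; \frac{z(\alpha)}{\phi(z(\alpha))^2} \;\geq\; 0 \qquad \text{for } \alpha \in [0,1/2).
\]
Thus $\alpha \mapsto z(\alpha)$ is convex on $[0,1/2)$ and satisfies $z(0)=0$.

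Next I would invoke the elementary fact that whenever $F$ is convex on $[0,b)$ with $F(0)=0$, the ratio $F(\alpha)/\alpha$ is nondecreasing on $(0,b)$: for $0<\alpha_1<\alpha_2$, convexity gives $F(\alpha_1) \leq (\alpha_1/\alpha_2) F(\alpha_2)$. Applied to $z(\cdot)$, this reduces the sought-after inequality $z(\alpha)\leq 3\alpha$ on $[0,0.33]$ to the single endpoint inequality $z(0.33)\leq 0.99$, equivalently $\Phi(0.99)\geq 0.83$.

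Finally, I would verify $\Phi(0.99) \geq 0.83$ by a trapezoidal lower bound on $\int_0^{0.99}\phi(t)\,dt$: since $\phi''(t)=(t^2-1)\phi(t)\leq 0$ on $[-1,1]$, the function $\phi$ is concave there, so every chord lies below the graph, and partitioning $[0,0.99] = [0,0.5]\cup[0.5,0.99]$ yields
\[
\Phi(0.99) - \tfrac{1}{2} \;\geq\; \tfrac{0.5}{2}\bigl(\phi(0)+\phi(0.5)\bigr) + \tfrac{0.49}{2}\bigl(\phi(0.5)+\phi(0.99)\bigr).
\]
A direct calculation with $\phi(0)=1/\sqrt{2\pi}$, $\phi(0.5)=e^{-1/8}/\sqrt{2\pi}$, $\phi(0.99)=e^{-0.49005/1}/\sqrt{2\pi}$ shows that the right-hand side exceeds $0.333$, as required. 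The only real obstacle is this last numerical check: the safety margin above $0.33$ is small, so if one wants a more comfortable buffer, the partition should be refined (e.g., four equal subintervals), which only makes the lower bound tighter.
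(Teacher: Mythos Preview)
Your proof is correct but follows a genuinely different route from the paper's. The paper proceeds by direct analytic estimation: it first notes $z(\alpha)\leq 1$ (since $\Phi(1)>0.83$), then uses the elementary bound $e^{-y^2/2}\geq 1-y^2/2$ to obtain $\sqrt{2\pi}\,\alpha \geq z(\alpha)-\tfrac{1}{6}z(\alpha)^3 \geq \tfrac{5}{6}z(\alpha)$, and bootstraps once more to arrive at $z(\alpha)\leq \frac{\sqrt{2\pi}}{1-1.51\,\alpha^2}\,\alpha$, whose coefficient is checked to be below $3$ at $\alpha=0.33$. Your argument instead exploits a structural property, namely convexity of the normal quantile function on $[0,1/2)$, to reduce everything to the single endpoint inequality $\Phi(0.99)\geq 0.83$, which you then verify by a trapezoidal lower bound using concavity of $\phi$ on $[-1,1]$. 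Your approach is cleaner conceptually and immediately explains why the worst ratio occurs at the right endpoint; the paper's approach avoids the somewhat tight numerical verification at the end and yields an explicit closed-form bound valid for every $\alpha$ in the range. Either way the margin is thin, and your remark about refining the partition is well taken.
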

\begin{proof}
It is a simple numerical fact that whenever $\alpha \leq 0.33$, $z(\alpha)\leq 1$; indeed, this follows as $\Phi(1)\simeq 0.8413>1/2+0.33$. 
Since $e^{-y^2/2}\geq 1 - \frac{y^2}{2}$, we have 
\begin{equation}
\label{eq:05}
\sqrt{2\pi} \alpha = \int_0^{z(\alpha) } e^{-y^2/2} dy 
\geq z(\alpha) - \frac{1}{6}\l( z(\alpha) \r)^3
\geq \frac{5}{6} z(\alpha),
\end{equation} 
Equation \eqref{eq:05} implies that 
$z(\alpha) \leq \frac{6}{5}\sqrt{2\pi} \, \alpha$. 
Proceeding again as in \eqref{eq:05}, we see that 
\begin{equation*}
\sqrt{2\pi} \alpha \geq 
z(\alpha) - \frac{1}{6}\l( z(\alpha) \r)^3
\\
\geq 
z(\alpha) - \frac{12\pi}{25}\alpha^2 z(\alpha)
\\
\geq z(\alpha) \l( 1 - 1.51 \, \alpha^2 \r),
\end{equation*}
hence $z(\alpha) \leq \frac{\sqrt{2\pi}}{1 - 1.51 \, \alpha^2}\, \alpha.$ 
The claim follows since $\alpha \leq 0.33$ by assumption, and $ \frac{\sqrt{2\pi}}{1 - 1.51 \cdot 0.33^2}<3$. 
\end{proof}

The following fact is well known; we present a short proof for reader's convenience. 
\begin{lemma}
\label{lemma:normal}
Let $\m A\subset \mb R$ be symmetric, meaning that $\m A=-\m A$, and let $Z\sim N(0,1)$. Then for all $x\in \mb R$,
\[
\pr{Z\in A - x}\geq e^{-x^2/2}\,\pr{Z\in A}.
\]
\end{lemma}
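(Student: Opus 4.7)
The plan is to translate the inequality into an integral comparison and then exploit symmetry. Write
\[
\Pr(Z \in \mathcal{A} - x) = \frac{1}{\sqrt{2\pi}}\int_{\mathcal{A}-x} e^{-z^2/2}\,dz = \frac{1}{\sqrt{2\pi}}\int_{\mathcal{A}} e^{-(y-x)^2/2}\,dy
\]
after the change of variables $y = z + x$. Expanding the quadratic gives $-(y-x)^2/2 = -y^2/2 + xy - x^2/2$, so
\[
\Pr(Z \in \mathcal{A} - x) = e^{-x^2/2}\,\mathbb{E}\!\left[e^{xZ}\,\mathbf{1}_{\mathcal{A}}(Z)\right].
\]

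Next I would use the hypothesis $\mathcal{A} = -\mathcal{A}$ to symmetrize the expectation. Since $-Z$ has the same distribution as $Z$ and $\mathbf{1}_{\mathcal{A}}(-Z) = \mathbf{1}_{\mathcal{A}}(Z)$, the substitution $y \mapsto -y$ shows that $\mathbb{E}[e^{xZ}\mathbf{1}_{\mathcal{A}}(Z)] = \mathbb{E}[e^{-xZ}\mathbf{1}_{\mathcal{A}}(Z)]$. Averaging the two expressions,
\[
\mathbb{E}\!\left[e^{xZ}\mathbf{1}_{\mathcal{A}}(Z)\right] = \mathbb{E}\!\left[\cosh(xZ)\,\mathbf{1}_{\mathcal{A}}(Z)\right] \geq \mathbb{E}\!\left[\mathbf{1}_{\mathcal{A}}(Z)\right] = \Pr(Z \in \mathcal{A}),
\]
where the inequality uses the pointwise bound $\cosh(u) \geq 1$ for all real $u$. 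Combining this with the previous display yields the claimed inequality.

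There is really no obstacle here: the only ingredients are completing the square inside the Gaussian density and the elementary symmetrization via $\cosh$. The role of the symmetry assumption is to convert the one-sided tilt $e^{xy}$ into the non-negative (indeed, $\geq 1$) quantity $\cosh(xy)$, which is exactly where the factor $e^{-x^2/2}$ gets picked up as the sharp multiplicative loss.
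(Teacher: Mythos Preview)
Your proof is correct. The paper takes a slightly different route: it starts from $\Pr(Z\in A)$, inserts the trivial factor $e^{-xz/2}e^{xz/2}$ against the Gaussian density, applies Cauchy--Schwarz, and then uses the symmetry $A=-A$ (together with $Z\stackrel{d}{=}-Z$) to show that the two resulting factors both equal $\Pr(Z\in A-x)$, yielding $\Pr(Z\in A)\leq e^{x^2/2}\Pr(Z\in A-x)$. Your argument instead performs the exponential tilt up front to obtain $\Pr(Z\in A-x)=e^{-x^2/2}\mathbb{E}[e^{xZ}\mathbf{1}_A(Z)]$ and then symmetrizes the integrand directly via $\cosh(xZ)\geq 1$. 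Both proofs hinge on the same Gaussian completion of squares and the same symmetry hypothesis, but yours replaces the Cauchy--Schwarz step by the more elementary pointwise bound $\cosh\geq 1$; this is marginally cleaner and makes the role of symmetry (turning $e^{xZ}$ into an even function of $Z$) perhaps more transparent.
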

\begin{proof}
Observe that
\begin{align*}
\pr{Z\in A} & = \int_\mb R I\{z\in A\} \frac{1}{\sqrt{2\pi}}e^{-z^2/2} dz = e^{x^2/2}\int_\mb R I\{z\in A\} e^{-xz/2}e^{xz/2}\frac{1}{\sqrt{2\pi}}e^{-z^2/2} e^{-x^2/2} dz 
\\
& \leq e^{x^2/2} \sqrt{ \int_\mb R I\{z\in A\} \frac{1}{\sqrt{2\pi}}e^{-(z-x)^2/2}dz}\sqrt{ \int_\mb R I\{z\in A\} \frac{1}{\sqrt{2\pi}}e^{-(z+x)^2/2}dz} 
\\
& =  e^{x^2/2} \int_\mb R I\{z\in A\} \frac{1}{\sqrt{2\pi}}e^{-(z-x)^2/2}dz = e^{x^2/2}\,\pr{Z\in A - x},
\end{align*}
and the claim follows.
\end{proof}

\begin{lemma}
\label{lemma:variance}
Let $\rho$ satisfy Assumption \ref{ass:1}. Then for any random variable $Y$ with $\mb EY^2<\infty$, 
\[
\var\l(\rho'(Y)\r) \leq \var\l( Y \r).
\]
\end{lemma}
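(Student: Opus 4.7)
The plan is to reduce the claim to the Lipschitz property of $\rho'$ established in Remark \ref{rem:sup}, namely that $L(\rho')=1$. Given a $1$-Lipschitz function $g$, the standard strategy for bounding $\var(g(Y))$ by $\var(Y)$ is to introduce an independent copy $Y'$ of $Y$ and use the symmetrization identity
\[
\var(Z)=\tfrac{1}{2}\mb E\bigl[(Z-Z')^2\bigr]
\]
valid for any square-integrable $Z$ with independent copy $Z'$.

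First I would verify (or recall) this identity: if $Z,Z'$ are i.i.d.\ with $\mb E Z^2<\infty$, then
\[
\mb E[(Z-Z')^2]=\mb E Z^2 - 2(\mb E Z)^2 + \mb E Z'^2 = 2\var(Z).
\]
Next, let $Y,Y'$ be independent copies. Applying the identity twice,
\[
\var(\rho'(Y))=\tfrac{1}{2}\mb E\bigl[(\rho'(Y)-\rho'(Y'))^2\bigr],\qquad \var(Y)=\tfrac{1}{2}\mb E\bigl[(Y-Y')^2\bigr].
\]
By Remark \ref{rem:sup}, $|\rho'(a)-\rho'(b)|\leq |a-b|$ for all $a,b\in\mb R$, so pointwise $(\rho'(Y)-\rho'(Y'))^2\leq (Y-Y')^2$. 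Taking expectations and dividing by $2$ yields the desired inequality. (One should also note that the Lipschitz bound forces $\rho'(Y)$ to be square-integrable whenever $Y$ is, so both variances are finite and the manipulations are legitimate.)

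There is essentially no obstacle here: the only thing to be careful about is invoking the correct Lipschitz constant, which was already derived from Assumption \ref{ass:1} in Remark \ref{rem:sup} via the observation that $z-\rho'(z)$ is nondecreasing and $\rho'(1)=1$. Everything else is a one-line application of the coupling identity.
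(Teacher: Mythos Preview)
Your argument is correct: once Remark \ref{rem:sup} gives $L(\rho')=1$, the coupling identity $\var(Z)=\tfrac12\mb E[(Z-Z')^2]$ immediately yields the claim via $|\rho'(a)-\rho'(b)|\leq|a-b|$. This is the standard route for bounding the variance of a $1$-Lipschitz transform.

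The paper takes a different path. Rather than passing through the Lipschitz constant and an independent copy, it works directly with the monotonicity of $t\mapsto t-\rho'(t)$ from Assumption \ref{ass:1}(ii). It sets $x=t-\mb EY$, $y=\rho'(t)-\mb E\rho'(Y)$, uses the elementary bound $x^2-y^2\geq 2y(x-y)$, and then replaces the factor $y$ by the constant $\rho'(t_0)-\mb E\rho'(Y)$, where $t_0$ is the crossing point of $\tilde u(t)=x-y$. A case split $t\geq t_0$ versus $t<t_0$ (using that $\rho'$ is nondecreasing and $\tilde u$ changes sign at $t_0$) shows the resulting lower bound is valid for all $t$; substituting $t=Y$ and taking expectations kills the right-hand side. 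Your approach is shorter and applies to any $1$-Lipschitz function; the paper's argument is self-contained in the sense that it uses Assumption \ref{ass:1}(ii) directly without first deriving the Lipschitz property, at the cost of a slightly more involved case analysis.
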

\begin{proof}
The function $u(t)=t-\rho'(t) - c$ is nondecreasing for any $c\in \mb R$ by Assumption \ref{ass:1}, hence for any $s\in \mb R$, there exists $t(s)$ such that $u(t)\geq s$ iff $t\geq t(s)$. Next, consider $\tilde u(t) = t - \mb EY - (\rho'(t) - \mb E\rho'(Y))$, and let $t_0$ be such that $\tilde u(t)\geq 0$ for $t\geq t_0$ and $\tilde u(t) \leq 0$ when $t \leq t_0$. 
For any $x,y\in \mb R$, $x^2 - y^2\geq 2y(x-y)$. Letting $x=t - \mb EY$ and $y = \rho'(t) - \mb E\rho'(Y)$, we obtain 
\begin{equation*}
(t - \mb EY)^2 - (\rho'(t) - \mb E\rho'(Y))^2 \geq 2(\rho'(t) - \mb E\rho'(Y))\l(t - \mb EY - (\rho'(t) - \mb E\rho'(Y))\r). 
\end{equation*}
Consider two cases: (a) $t\geq t_0$ and (b) $t<t_0$. In the first case, $\rho'(t) - \mb E\rho'(Y) \geq \rho'(t_0) - \mb E \rho'(Y)$ and $\l(t - \mb EY - (\rho'(t) - \mb E\rho'(Y))\r)\geq 0$, hence 
\[
(t - \mb EY)^2 - (\rho'(t) - \mb E\rho'(Y))^2 \geq 2(\rho'(t_0) - \mb E\rho'(Y))\l(t - \mb EY - (\rho'(t) - \mb E\rho'(Y))\r).
\]
In the second case, $\rho'(t) - \mb E\rho'(Y) \leq \rho'(t_0) - \mb E \rho'(Y)$ and $\l(t - \mb EY - (\rho'(t) - \mb E\rho'(Y))\r)\leq 0$, hence again 
\begin{equation}
\label{eq:convex}
(t - \mb EY)^2 - (\rho'(t) - \mb E\rho'(Y))^2 \geq 2(\rho'(t_0) - \mb E\rho'(Y))\l(t - \mb EY - (\rho'(t) - \mb E\rho'(Y))\r).
\end{equation}
Replacing $t$ by $Y$ in \eqref{eq:convex} and taking the expectation yields the result. 
\end{proof}

\begin{lemma}
\label{lemma:minmax}
Let $p\in[2,3]$, and assume that $X\in \mb R^d$ has distribution $P$ from a class $\m P$ of all distributions with bounded $p$-th moments of one-dimensional projections, meaning that $\sup_{\|v\|_2=1}\mb E_P\l| \dotp{X-\mb EX}{v}\r|^p \leq 1$. 
Let $X_1,\ldots,X_N$ be a sample from $(1-\eps)P + \eps Q$ where $0\leq \eps<1/2$, $P\in \m P$ and $Q$ is an arbitrary distribution. 
Then for any estimator $\widehat \mu(X_1,\ldots,X_N)$ of the mean $\mu(P)$, 
\[
\sup_{P\in \m P,Q} \pr{\|\widehat \mu - \mu\|_2 \geq c_1\l( \sqrt{\frac{\tr(\Sigma)}{N}} \vee \eps^{1-1/p} \r)} \geq c_2.
\] 
where $\Sigma$ is the covariance matrix of $P$ and $c_1,c_2$ are absolute constants. 
\end{lemma}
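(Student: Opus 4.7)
The plan is to combine two independent two-point (Le Cam) reductions, one that delivers the statistical rate $\sqrt{\tr(\Sigma)/N}$ and one that delivers the contamination rate $\eps^{1-1/p}$. Since the maximum of two lower bounds is itself a lower bound, it is enough to produce, for each term separately, a pair of candidate distributions compatible with the contamination model for which any measurable estimator $\wh\mu$ must incur the corresponding error with constant probability; the final statement then follows upon selecting whichever pair yields the larger mean separation.

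For the statistical term I would take the contamination to be trivial (choose $Q=P$, so that the observed law equals the clean $P$) and compare $P_1 = N(0,\Sigma_0)$ with $P_2 = N(\mu,\Sigma_0)$, where $\Sigma_0$ is any PSD matrix satisfying the one-dimensional $p$-th moment constraint, for instance $\Sigma_0 = c_0 I$ for a small absolute constant $c_0$. Choosing $\|\mu\|_2 \asymp \sqrt{\tr(\Sigma_0)/N}$ keeps the KL divergence between the product measures $P_1^{\otimes N}$ and $P_2^{\otimes N}$ bounded by an absolute constant, so Le Cam's two-point lemma forces $\max_{i\in\{1,2\}}\pr{\|\wh\mu-\mu(P_i)\|_2 \geq c_1\sqrt{\tr(\Sigma_0)/N}} \geq c_2$.

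For the contamination term I work in a single coordinate, embedding the construction into $\mb R^d$ along an arbitrary unit vector. Let $a = \eps^{-1/p}$, fix a small absolute constant $c>0$, and consider $P_1 = \delta_0$ and $P_2 = (1-\eps)\delta_0 + \eps\,\delta_{ca}$. A direct computation gives $\mb E_{P_2}|X - \mu(P_2)|^p \leq c^p(\eps^{p-1} + (1-\eps)^p) \leq 1$ once $c$ is small enough, while $|\mu(P_2) - \mu(P_1)| = c\,\eps^{1-1/p}$, so both laws lie in $\m P$. Because $\text{TV}(P_1,P_2) = \eps$, the signed-measure identity $(1-\eps)(P_1-P_2) = \eps(Q_2-Q_1)$ can be solved with honest probability measures $Q_1, Q_2$: using the Hahn decomposition $(P_1-P_2)_+ = \eps\delta_0$ and $(P_1-P_2)_- = \eps\delta_{ca}$ one can take $Q_1 = \eps\nu + (1-\eps)\delta_{ca}$ and $Q_2 = \eps\nu + (1-\eps)\delta_0$ for any probability measure $\nu$, and check that $(1-\eps)P_1 + \eps Q_1 = (1-\eps)P_2 + \eps Q_2$. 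Since the two contamination models induce identical laws on the observed sample, no estimator can tell the hypotheses apart from any amount of data, and $\|\wh\mu - \mu(P_i)\|_2 \geq c\,\eps^{1-1/p}/2$ for some $i$ with probability at least $1/2$.

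The main obstacle is bookkeeping the absolute constants so that the same $c_1,c_2$ serve both reductions, and carefully verifying the Hahn-decomposition step above so that $Q_1$ and $Q_2$ come out as legitimate probability measures; both are routine but easy to get wrong. A minor subtlety is that the lemma hypothesis $\eps < 1/2$ is used only to guarantee that the coupling factor $1-\eps$ stays bounded away from $0$, and that the bound is stated as a maximum rather than a sum means we need not worry about the crossover regime where the two rates are comparable.
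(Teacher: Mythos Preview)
Your proposal is correct and follows essentially the same approach as the paper: handle the $\sqrt{\tr\Sigma/N}$ term via the standard (uncontaminated) minimax lower bound, and handle the $\eps^{1-1/p}$ term by exhibiting two distributions $P_1,P_2\in\m P$ and contaminations $Q_1,Q_2$ such that $(1-\eps)P_1+\eps Q_1=(1-\eps)P_2+\eps Q_2$, making the hypotheses perfectly indistinguishable. The paper's specific construction uses a symmetric three-atom support $\{-\eps^{-1/p},0,\eps^{-1/p}\}$ with Dirac contaminations, whereas you take $P_1=\delta_0$ versus $P_2=(1-\eps)\delta_0+\eps\,\delta_{ca}$ and build $Q_1,Q_2$ via the Hahn decomposition, but the mechanism is identical.
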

\begin{proof}
If $\eps=0$, then it is well known that the minimax rate of estimating the mean is $\sqrt{\frac{\tr \Sigma}{N}}$ (e.g. see the remark following Theorem 3 in \cite{lugosi2018near}). 
Let's assume that $\eps>0$ and suppose that $P_1,P_2$ are two distributions supported on $\l\{ \eps^{-1/p},0,-\eps^{1/p}\r\}$, namely, 
$P_1(0) = P_2(0) = \frac{1-2\eps}{1-\eps},$ $P_1\l(\eps^{-1/p}\r)=\frac{\eps}{1-\eps}$, $P_2\l( -\eps^{-1/p}\r) = 0$, 
$P_2\l( \eps^{-1/p}\r)=0$, $P_2\l( -\eps^{-1/p} \r)=\frac{\eps}{1-\eps}$. 
Clealy, $P_1\in \m P, \ P_2\in \m P$, the means of $P_1, P_2$ are $\mu(P_1)=\frac{\eps^{1-1/p}}{1-\eps}$, $\mu(P_2)=-\frac{\eps^{1-1/p}}{1-\eps}$ respectively and $\l|\mu(P_1)-\mu(P_2)\r| = \frac{2\eps^{1-1/p}}{1-\eps}$.
 
Next, let $Q_1$ and $Q_2$ be Dirac measures, namely $Q_1=\delta_{\eps^{-1/p}}$ and $ Q_2 = \delta_{-\eps^{-1/p}}$. 
Then it is easy to check that $(1-\eps)P_1 + \eps Q_2 = (1-\eps) P_2 + \eps Q_1 = \hat P$, hence, given samples from $\hat P$, it is impossible to distinguish between $P_1$ and $P_2$. 
Conclusion now follows from Theorem 5.1 in \cite{chen2018robust}. 
\end{proof}

\section*{Acknowledgements. }

Part of this work was completed during my visit to ENSAE ParisTech. I would like to thank Faculty members and students at ENSAE for their hospitality.

\bibliographystyle{imsart-nameyear}	
\bibliography{RobustERM}

\end{document}